\documentclass[12pt]{amsart} 
\usepackage{amsmath,amssymb,amscd,amsthm}
\usepackage{latexsym}
\usepackage{graphicx}
\usepackage[english]{babel}
\usepackage[latin1]{inputenc}       
\usepackage{textcomp}
\usepackage{times}
\setlength{\textheight}{20cm} \textwidth 150mm
\usepackage{pgf,pgfarrows,pgfnodes,pgfautomata,pgfheaps}
\usepackage{colortbl}

\usepackage[a4paper,twoside,left=3cm,right=2.8cm,top=3.1cm,bottom=2.3cm]{geometry}

\newtheorem{theorem}{Theorem}[section]
\newtheorem{corollary}[theorem]{Corollary}
\newtheorem{proposition}[theorem]{Proposition}
\newtheorem{lemma}[theorem]{Lemma}

\newtheorem{remark}[theorem]{Remark}

\newtheorem{conjecture}[theorem]{Conjecture}

\def\irr#1{{\rm Irr}(#1)}
\def\irrr#1#2 {\irr {#1 \mid #2}}
\newcommand{\R}{\mathbb R}
\newcommand{\N}{\mathbb N}

\newcommand{\s}{\mathbb S}
\newcommand{\sfe}{{{\mathbb S}^{n-1}}}

\newcommand{\K}{\mathcal K}
\newcommand{\C}{C^{2,+}(\sfe)}
\newcommand{\trace}{{\rm tr}}

\begin{document}

\title[On the stability of Brunn-Minkowski inequalities]{On the stability
\\
of Brunn-Minkowski type inequalities}
\author[Andrea Colesanti, Galyna Livshyts, Arnaud Marsiglietti]{Andrea Colesanti, Galyna Livshyts, Arnaud Marsiglietti}
\address{Dipartimento di Matematica e Informatica ``U. Dini",
Universit\`a degli Studi di Firenze}
\email{colesant@math.unifi.it}
\address{School of Mathematics, Georgia Institute of Technology} \email{glivshyts6@math.gatech.edu}
\address{Center for the Mathematics of Information, California Institute of Technology}
\email{amarsigl@caltech.edu}
\subjclass[2010]{Primary: 52} 
\keywords{Convex bodies, log-concave, Brunn-Minkowski, Cone-measure}
\date{\today}
\begin{abstract}
We establish the stability near a Euclidean ball of two conjectured inequalities: the dimensional Brunn-Minkowski inequality for radially symmetric
log-concave measures in $\R^n$, and of the log-Brunn-Minkowski inequality. 
\end{abstract}
\maketitle

\section{Introduction}

The classical Brunn-Minkowski inequality states that for $\lambda\in[0,1]$ and for Borel measurable sets $A$ and $B$ in $\R^n$,
such that $(1-\lambda)A+\lambda B$ is measurable as well,
\begin{equation}\label{Brun-Mink}
|\lambda A+(1-\lambda)B|^{\frac{1}{n}}\geq \lambda |A|^{\frac{1}{n}}+(1-\lambda)|B|^{\frac{1}{n}}.
\end{equation}
Here $|\cdot|$ denotes the Lebesgue measure, the addition between sets is the standard vector addition, and multiplication of sets by non-negative reals
is the usual dilation.

This inequality has found many important applications in Geometry and 
Analysis  (see {\em e.g.} Gardner \cite{Gar} for an exhaustive survey on this subject). 
For example, the classical isoperimetric inequality can be deduced in a few lines
from \eqref{Brun-Mink}. Also, Maurey \cite{M} deduced from this inequality the Poincar\'e inequality for the Gaussian measure and Gaussian concentration properties. 
Based on Maurey's results, Bobkov and Ledoux proved that the Brunn-Minkowski inequality implies Brascamp-Lieb and log-Sobolev inequalities \cite{BL1}; they also 
deduced sharp Sobolev and Gagliardo-Nirenberg inequalities \cite{BL2}. A different argument was developed by the first named author in \cite{Col1} to deduce Poincar\'e
 type inequalities on the boundary of convex bodies from the Brunn-Minkowski inequality.

\medskip

Recall that a convex body is a convex compact set with non-empty interior. The family of convex bodies of $\R^n$ will be denoted by $\K^n$. For the theory of convex bodies we refer the reader to the books by Ball \cite{Ball-book}, Bonnesen, Fenchel \cite{BF}, Koldobsky \cite{Kold}, Milman and Schechtman \cite{MS}, Schneider \cite{book4} and others. A measure $\gamma$ on $\R^n$ is called log-concave if for any pair of sets $A$ and $B$ and for any scalar $\lambda\in [0,1]$,
\begin{equation}\label{log-concavity}
\gamma(\lambda A+(1-\lambda)B)\geq \gamma(A)^{\lambda}\gamma(B)^{1-\lambda}.
\end{equation}
Borell showed \cite{bor} that a measure is log-concave if it has a density (with respect to the Lebesgue measure) which is log-concave (see also Pr\'{e}kopa \cite{P}, Leindler \cite{L}). In particular, the Lebesgue measure on $\R^n$ is log-concave:
\begin{equation}\label{Brun-Mink_mult}
|\lambda A+(1-\lambda)B|\geq |A|^{\lambda}|B|^{1-\lambda}.
\end{equation}
Inequality (\ref{Brun-Mink}) implies (\ref{Brun-Mink_mult}) by the arithmetic-geometric mean inequality. Conversely, a simple argument based on the homogeneity
of Lebesgue measure shows that 
(\ref{Brun-Mink_mult}) implies (\ref{Brun-Mink}) (see, for example, \cite{Gar}). 
In general, a property analogous to (\ref{Brun-Mink}) may not hold for log-concave measures which are not homogeneous. The transposition of 
\eqref{Brun-Mink} to a measure $\gamma$,
\begin{equation}\label{Brun-Mink-gamma}
\gamma(\lambda A+(1-\lambda)B)^{\frac{1}{n}}\geq \lambda \gamma(A)^{\frac{1}{n}}+(1-\lambda)\gamma(B)^{\frac{1}{n}},\quad\forall\,\lambda\in[0,1],
\end{equation}
as $A$ and $B$ vary in some class of sets, will be called a \textbf{dimensional Brunn-Minkowski inequality}. If $\gamma$ is the Gaussian measure, $A=\{p\}$, 
$p\in\R^n$, and $B$ is measurable set with positive measure, then the set $A+B$ is the translate of $B$ by $p$. Hence, letting $|p|\to\infty$, and keeping $B$ fixed, 
(\ref{Brun-Mink-gamma}) fails. Moreover, Nayar and Tkocz \cite{polish} constructed an example in which 
(\ref{Brun-Mink-gamma}) fails for the Gaussian measure while both $A$ and $B$ contain the origin. 
Gardner and Zvavitch \cite{GZ} proved that, for the Gaussian measure, \eqref{Brun-Mink-gamma} holds if the sets $A$ and $B$ are convex symmetric dilates of each other.
They also proposed a conjecture for the Gaussian measure, that we state it in a more general form.
\begin{conjecture}[Gardner, Zvavitch -- generalized]\label{Gauss-BM}
Let $n\geq 2$. Let $\gamma$ be a log-concave symmetric measure (i.e. $\gamma(A)=\gamma(-A)$ for every measurable set $A$)
on $\R^n.$ Let $K$ and $L$ be symmetric convex bodies in $\R^n$. Then
\begin{equation}\label{Log_conc_BM}
\gamma(\lambda K+(1-\lambda)L)^{\frac{1}{n}}\geq \lambda \gamma(K)^{\frac{1}{n}}+(1-\lambda)\gamma(L)^{\frac{1}{n}}.
\end{equation}
\end{conjecture}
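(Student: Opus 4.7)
The plan is to reduce the Brunn--Minkowski statement \eqref{Log_conc_BM} to a single infinitesimal (local) spectral estimate on the sphere and then prove that estimate by a Bochner-type identity that exploits the symmetry of both the bodies and the measure. First, by a standard homogeneity/continuity argument, \eqref{Log_conc_BM} for all symmetric convex bodies $K,L$ is equivalent to the concavity of $F(t) := \gamma(K+tL)^{1/n}$ on $[0,\infty)$ for every such pair; by approximation we may assume $K,L \in \C$, and then it suffices to prove $F''(0) \le 0$ in this smooth class.

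Second, parametrize $\partial K$ by the inverse Gauss map $\nu_K^{-1}:\sfe\to\partial K$ and write $\gamma = e^{-V}\,dx$ with $V$ convex and even. Standard shape-derivative formulas express $F'(0)$ and $F''(0)$ as integrals over $\sfe$ involving $u := h_L|_{\sfe}$, which is even because $L$ is symmetric. The inequality $F''(0)\le 0$ then rewrites as the Poincar\'e-type estimate
\[
\Bigl(\int_{\sfe} u\,d\mu_{K,\gamma}\Bigr)^{2} \;\ge\; n\,\gamma(K)\,Q_{K,\gamma}(u),
\]
where $\mu_{K,\gamma}$ is the $\gamma$-weighted cone measure on $\sfe$ and $Q_{K,\gamma}$ is a quadratic form built from the reverse Weingarten map of $K$, the normal component of $\mathrm{Hess}\,V$ evaluated at $\nu_K^{-1}$, and the first and second covariant derivatives of $u$ on the sphere. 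The symmetry of $K$ and $V$ makes both $\mu_{K,\gamma}$ and $Q_{K,\gamma}$ invariant under the antipodal map of $\sfe$, so it is enough to prove the estimate on the closed subspace of even $C^{2}$ functions on $\sfe$.

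Third, attack this spectral inequality by decomposing the excess $Q_{K,\gamma}(u) - \tfrac{1}{n\gamma(K)}\bigl(\int u\,d\mu_{K,\gamma}\bigr)^{2}$ into three non-negative pieces: a Brascamp--Lieb-type term from $\langle\mathrm{Hess}\,V\cdot\nu,\nu\rangle \ge 0$, a curvature term non-negative by convexity of $K$, and a spherical Poincar\'e term for $u$ restricted to even functions. Evenness eliminates the degree-one spherical harmonics --- the translational modes behind the Nayar--Tkocz obstructions in the non-symmetric Gaussian setting --- and yields an improved Poincar\'e constant on the remaining modes.

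The main obstacle is the treatment of the degree-two even spherical harmonics, i.e.\ linear combinations of $x_i x_j - \tfrac{1}{n}\delta_{ij}$, which are the borderline modes where the inequality is saturated when $K$ is a Euclidean ball and $\gamma$ is Gaussian, and on which the reverse Weingarten map of $K$ and $\mathrm{Hess}\,V$ must conspire exactly. My plan to break this deadlock is a two-step passage: first establish the estimate under the strong log-concavity assumption $\mathrm{Hess}\,V\ge c I$ via a direct Brascamp--Lieb/Kolesnikov--Milman $L^{2}$ argument on $\partial K$, where the extra $cI$ provides the needed slack on the degree-two harmonics; then remove the uniform log-concavity by a symmetry-preserving approximation (e.g. convolving $\gamma$ with a small symmetric Gaussian) and passing to the limit in $F''(0)$. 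A parallel approximation of $K$ by a path of symmetric bodies from the ball to $K$ allows one to track the spectral gap by a continuity argument, ensuring that no even eigenvalue crosses zero --- a property guaranteed precisely by the exclusion of the antisymmetric (translational) modes.
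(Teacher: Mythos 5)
The statement you have set out to prove is labeled as a \emph{conjecture} in the paper, and the paper does not prove it; it is still open in the generality stated (for arbitrary symmetric log-concave $\gamma$ and arbitrary symmetric $K,L$). What the paper actually establishes is Theorem~\ref{BM_stability}, a \emph{local stability} result: the inequality \eqref{Log_conc_BM} for a rotation-invariant log-concave $\gamma$ when $K$ and $L$ are small perturbations $R+\epsilon_i\psi$ of a fixed Euclidean ball, together with the equivalence (Lemma~\ref{key_lemma_BM}) between the conjecture and the concavity of $s\mapsto\gamma(K_s)^{1/n}$ along one-parameter families. Your first two steps are a reasonable analogue of Lemma~\ref{key_lemma_BM} plus the second-variation formula of Section~\ref{appendix}, but from that point on you are attempting to prove something the authors only conjecture.

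The gap in your proposal is precisely where you acknowledge it: the degree-two even spherical harmonics. Your two proposed repairs do not close it. First, convolving $\gamma$ with a small symmetric Gaussian gives uniform log-concavity $\nabla^2 V\ge cI$ only with $c\to 0$ as the approximation improves; any slack you gain on the borderline modes is proportional to $c$ and vanishes in the limit, so the estimate you need (which is an equality on those modes already for the ball and the Gaussian) cannot be recovered this way. Second, the ``continuity of the spectrum along a path from the ball to $K$, so no even eigenvalue crosses zero'' is an assertion, not an argument: eigenvalues of the relevant quadratic form do vary continuously, but nothing in your setup prevents one of them from becoming negative as $K$ moves away from the ball --- that is exactly what one would have to \emph{prove}, and it is the content of the conjecture. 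Note also that evenness eliminating the degree-one harmonics is not used in the paper's proof of Theorem~\ref{BM_stability}: in the dimensional (as opposed to logarithmic) Brunn--Minkowski case the sharp Poincar\'e constant $n-1$ on $\sfe$ already suffices for arbitrary $\psi$, and the paper stresses (Remark~\ref{Log-BM_shift}) that no symmetry of the perturbation is needed there. So even the structural role you assign to symmetry is calibrated to the log-BM side rather than to the conjecture in question.

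If you want a proof that can be checked against the paper, the right target is Theorem~\ref{BM_stability}: fix $h\equiv R$, compute $g'(0)$ and $g''(0)$ explicitly via \eqref{first derivative at zero2}--\eqref{second derivative at zero2}, split $\psi$ into its mean and its mean-zero part, use the spherical Poincar\'e inequality with constant $n-1$, $f'\le 0$, and the known one-dimensional (balls) case from \cite{LMNZ}, \cite{B-conj}, \cite{christos}. That is a complete argument; the full conjecture is not.
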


\medskip

Next, we pass to describe the log-Brunn-Minkowski inequality.
For a scalar $\lambda\in[0,1]$ and for convex bodies $K$ and $L$ containing the origin in their interior, with support functions $h_K$ and $h_L$, respectively
(see section \ref{preliminaries} for the definition), define their geometric average as follows:
\begin{equation}
K^{\lambda}L^{1-\lambda}:=\{x\in \R^n\,:\, \langle x,u\rangle\leq h_K^{\lambda}(u) h_L^{1-\lambda}(u)\,\,\forall u\in \sfe\},
\end{equation}
where $\langle\cdot,\cdot\rangle$ is the standard scalar product in $\R^n$. 
This set is again a convex body, whose support function is, in general,
smaller than the geometric mean of the support functions of $K$ and $L$. 
The following is widely known as log-Brunn-Minkowski conjecture (see \cite{BLYZ}).

\begin{conjecture}[B\"or\"oczky, Lutwak, Yang, Zhang]\label{Log-BM}
Let $n\geq 2$ be an integer. Let $K$ and $L$ be symmetric convex bodies in $\R^n$. Then
\begin{equation}\label{Log-BM_eq}
|K^{\lambda}L^{1-\lambda}|\geq |K|^{\lambda}|L|^{1-\lambda}.
\end{equation}
\end{conjecture}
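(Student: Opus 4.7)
The plan is to reduce \eqref{Log-BM_eq} to an infinitesimal (second-order) inequality via a continuous deformation, and then attempt to prove that infinitesimal inequality by spectral methods on the unit sphere $\sfe$.

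First I would replace the two-body formulation with a one-parameter family. Given symmetric $K,L\in \K^n$ with $C^{2,+}$ support functions, set $K_t := K^{1-t} L^t$ for $t\in[0,1]$, so that $\log h_{K_t} = (1-t)\log h_K + t\log h_L$. Inequality \eqref{Log-BM_eq} is equivalent to the concavity on $[0,1]$ of $f(t) := \log |K_t|$; by approximation it suffices to treat the smooth case, where $f$ is $C^2$. A standard first-variation computation for support functions converts $f''(t)\le 0$ into a quadratic inequality in $\phi:=\log h_L - \log h_K$ of the schematic form
\begin{equation*}
\int_{\sfe} \phi\,L_{K_t}\phi\, du \;+\; \frac{1}{|K_t|}\left(\int_{\sfe}\phi\, h_{K_t}\, dS_{K_t}\right)^{2} \;\le\; n \int_{\sfe}\phi^{2}\, h_{K_t}\, dS_{K_t},
\end{equation*}
where $L_M$ denotes the Hilbert--Brunn--Minkowski operator of $M$ and $dS_M$ its surface area measure. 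Proving the conjecture thus reduces to a uniform Poincar\'e-type inequality for $L_M$ across the class of symmetric convex bodies.

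Second, I would frame this as a spectral gap on $\sfe$ restricted to \emph{even} functions, which is the only relevant class when $K$ and $L$ are origin-symmetric. For the Euclidean ball $B$ the operator $L_B$ diagonalises on spherical harmonics and the inequality is explicit: the first nontrivial even eigenvalue occurs at spherical degree $2$ and saturates only on linear functions, which vanish under the origin-symmetry hypothesis. The task is then to transport this spectral gap from $B$ to an arbitrary symmetric $K\in\K^n$. My strategy is a two-step deformation: (i) connect $K$ to $B$ by a continuous path $K_s$ of symmetric $C^{2,+}$ bodies, produced for instance by the logarithmic geometric mean $K_s=K^{1-s}B^{s}$ itself; (ii) show that the relevant even eigenvalue of $L_{K_s}$ stays above the Lebesgue-measure-corrected threshold along the whole path. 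Near $B$ the stability results established earlier in the paper cover the final segment of the path, so it remains to handle $s$ bounded away from $1$.

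The hard part will be step (ii) away from the ball. Ruling out collapse of the even spectral gap at some intermediate $K_s$ is precisely the log-Minkowski uniqueness problem and is why the conjecture has resisted all general attempts to date. I would try two complementary routes: first, a symmetrization argument, replacing $K_s$ by its Steiner symmetrization in any hyperplane of symmetry and showing that a saturating eigenfunction would force $K_s$ to inherit new symmetries, eventually reducing to the known unconditional case of \eqref{Log-BM_eq}; and second, a Bochner-type identity for $L_{K_s}$ on even functions, designed to absorb the quadratic deficit using only origin-symmetry. The genuine obstacle is that a generic symmetric $K$ carries no reflection symmetries beyond $x\mapsto -x$, so the symmetrization route stalls immediately, and the Bochner route appears to demand a new pointwise convexity estimate on the cone volume measure $h_{K_s}\,dS_{K_s}$ that is not implied by the second fundamental form of $\partial K_s$ alone. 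Producing such an estimate, or finding a replacement for it, is where the plan would have to innovate, and I regard it as the decisive step separating this strategy from a full resolution of the conjecture.
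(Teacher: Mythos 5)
This item is a \emph{conjecture} in the paper, not a theorem: the authors state it as open, list the known special cases ($n=2$, unconditional bodies, complex bodies), and only establish a local stability result near the Euclidean ball (Theorem~\ref{Log-BM_stability}). There is therefore no proof of \eqref{Log-BM_eq} in the paper to compare against, and your proposal --- as you yourself concede in the final paragraph --- also does not constitute a proof.

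The parts of your plan that do land are precisely the parts the paper carries out. Reducing \eqref{Log-BM_eq} to the concavity of $t\mapsto\log|K_t|$ along a geometric-mean path and then to a second-order (Poincar\'e-type) inequality in $\phi$ matches the paper's Lemma~\ref{key_lemma_Log-BM} and Theorem~\ref{Log_BM_infinitesimal_main_TH}; and the spectral computation at the ball, diagonalizing on spherical harmonics, observing that the degree-$1$ eigenspace (linear functions, eigenvalue $n-1$) is killed by evenness so that the relevant even threshold jumps to $2n$, is exactly the paper's proof of Theorem~\ref{Log-BM_stability}. Where the argument would have to become a proof is your step (ii): propagating the even spectral gap of the Hilbert--Brunn--Minkowski operator along the path $K_s=K^{1-s}B^s$ away from $s=1$. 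That is not a technical detail to be filled in; it is equivalent to the uniqueness part of the even log-Minkowski problem and is the content of the conjecture itself. Neither of the two routes you sketch (Steiner symmetrization, a Bochner identity on even functions) is known to close this, for the reasons you yourself give: a generic symmetric body has no reflection symmetries to exploit, and no pointwise curvature estimate strong enough to run the Bochner argument is currently available. So the proposal is an accurate map of the problem, and its local portion coincides with what the paper proves, but it leaves the central difficulty --- and the conjecture --- open, just as the paper does.
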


Important applications and motivations for Conjecture \ref{Log-BM} can be found in \cite{BLYZ-1}, \cite{BLYZ-2}.

It is not difficult to see that the condition of symmetry is necessary (see \cite{BLYZ} or Remark \ref{Log-BM_shift} below). 
B\"or\"oczky, Lutwak, Yang and Zhang showed that this conjecture holds for $n=2$. Saroglou \cite{christos} and Cordero, Fradelizi, Maurey 
\cite{B-conj}  proved that (\ref{Log-BM_eq}) is true when the sets $K$ and $L$ are unconditional (i.e. they are symmetric with respect to every coordinate hyperplane). 
Rotem \cite{liran} showed that log-Brunn-Minkowski conjecture holds for complex convex bodies. Saroglou showed \cite{christos1} that the validity of Conjecture \ref{Log-BM} 
would imply the same statement for every log-concave symmetric measure $\gamma$ on $\R^n$: for every symmetric $K,L\in\K^n$ and for every $\lambda\in[0,1]$,
\begin{equation}\label{Log-BM-general}
\gamma(K^{\lambda}L^{1-\lambda})\geq \gamma(K)^{\lambda}\gamma(L)^{1-\lambda}.
\end{equation}

Note that the straightforward inclusion
$$
K^{\lambda}L^{1-\lambda}\subset \lambda K+(1-\lambda)L
$$ 
tells us that (\ref{Log-BM-general}) is stronger than (\ref{log-concavity}), for every measure.

In \cite{LMNZ} the second and third named authors, Nayar and Zvavitch showed that (\ref{Log-BM-general}) implies (\ref{Log_conc_BM}) for every ray-decreasing 
measure $\gamma$ on $\R^n$ and for every pair of convex sets $K$ and $L$. Therefore, Conjecture \ref{Gauss-BM} holds on the plane and for unconditional sets.

The main results of this paper are the two theorems below.

\begin{theorem}[The dimensional Brunn-Minkowski inequality near a ball]\label{BM_stability}
Let $\gamma$ be a rotation invariant log-concave measure on $\R^n$.
Let $R\in (0,\infty)$. Let $\psi\in C^2(\sfe)$. Then there exists a sufficiently small $a>0$ such that for every $\epsilon_1,\epsilon_2\in (0,a)$ and for every $\lambda\in [0,1],$ one has
$$\gamma(\lambda K_1+(1-\lambda)K_2)^{\frac{1}{n}}\geq \lambda\gamma(K_1)^{\frac{1}{n}}+(1-\lambda)\gamma(K_2)^{\frac{1}{n}},$$
where $K_1$ is the convex set with the support function $h_1=R+\epsilon_1 \psi$ and $K_2$ is the convex set with the support function $h_2=R+\epsilon_2 \psi$.
\end{theorem}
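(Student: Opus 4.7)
The plan is to reduce the stated inequality to the concavity of a one-parameter function, then verify this via a second-order analysis at the ball. Observe that $\lambda K_1+(1-\lambda)K_2$ has support function $\lambda h_1+(1-\lambda)h_2=R+(\lambda\epsilon_1+(1-\lambda)\epsilon_2)\psi$. Hence, denoting by $K_t$ the convex body with support function $h_t:=R+t\psi$ (well-defined as a support function for $t$ in a small interval $[0,a]$, since $D^2_\sigma h_0+h_0 I=R I\succ 0$ and $\psi\in C^2(\sfe)$), the claim is exactly the concavity of $f(t):=\gamma(K_t)^{1/n}$ on $[0,a]$.

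Setting $\phi(t)=\gamma(K_t)$ and using the Gauss parametrization $F_t(u)=h_t(u)u+\nabla_\sigma h_t(u)$ of $\partial K_t$, together with the surface area element $\det(D^2_\sigma h_t+h_t I)\,d\sigma$ and the radial density $\rho$ of $\gamma$, one obtains
\[
\phi'(t)=\int_{\sfe}\rho\bigl(|F_t(u)|\bigr)\,\psi(u)\,\det\bigl(D^2_\sigma h_t+h_t I\bigr)(u)\,d\sigma(u).
\]
Evaluating at $t=0$ (where $K_0=B_R$ and $D^2_\sigma h_0+h_0 I=R I$) and integrating by parts on the sphere to eliminate $\Delta_\sigma\psi$ gives
\[
\phi'(0)=R^{n-1}\rho(R)\int_{\sfe}\psi\,d\sigma,\qquad \phi''(0)=\bigl(R^{n-1}\rho(R)\bigr)'\|\psi\|^2_{L^2(\sigma)}-R^{n-2}\rho(R)\|\nabla_\sigma\psi\|^2_{L^2(\sigma)}.
\]
Since $f''\le0$ is equivalent to $n\phi\phi''\le(n-1)(\phi')^2$, it suffices to verify this at $t=0$ and extend by continuity.

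Decompose $\psi=\bar\psi+\tilde\psi$ with $\bar\psi=\omega_{n-1}^{-1}\int_{\sfe}\psi\,d\sigma$ and $\int\tilde\psi\,d\sigma=0$. The inequality then splits cleanly into two independent contributions. The zeroth-mode ($\bar\psi$) part reduces to the concavity of $R\mapsto\gamma(B_R)^{1/n}$, i.e.\ the dimensional Brunn--Minkowski inequality restricted to dilates of a ball, which for rotation-invariant log-concave $\gamma$ (writing $\rho=e^{-V}$ with $V$ convex and non-decreasing) holds by a direct one-dimensional computation. The higher-mode ($\tilde\psi$) part reduces to the Poincar\'e-type inequality
\[
\left[(n-1)+\frac{R\rho'(R)}{\rho(R)}\right]\|\tilde\psi\|^2_{L^2(\sigma)}\le\|\nabla_\sigma\tilde\psi\|^2_{L^2(\sigma)}.
\]
Log-concavity gives $\rho'/\rho=-V'\le 0$, so the left-hand side is at most $(n-1)\|\tilde\psi\|^2$; the right-hand side is at least $(n-1)\|\tilde\psi\|^2$ by the Poincar\'e inequality on $\sfe$, the first nontrivial eigenvalue of $-\Delta_\sigma$ being exactly $n-1$.

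Combining the two cases yields $f''(0)\le 0$. By $C^2$-continuity of $\phi$ in $t$, the inequality $n\phi\phi''\le(n-1)(\phi')^2$ persists on all of $[0,a]$ after possibly shrinking $a$, giving concavity of $f$ and hence the stated dimensional Brunn--Minkowski inequality. The main obstacle is the zeroth-mode case: the concavity of $R\mapsto\gamma(B_R)^{1/n}$ is itself a genuinely dimensional statement that cannot be deduced from log-concavity alone, and requires a separate one-dimensional argument. The higher-mode case, by contrast, is handled by the exact matching between the critical constant $n-1$ and the first Laplacian eigenvalue on $\sfe$, with log-concavity supplying the one-sided bound $\rho'/\rho\le0$ that closes the argument.
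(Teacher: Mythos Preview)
Your proof follows essentially the same strategy as the paper: reduce to the concavity of $t\mapsto\gamma(K_t)^{1/n}$, compute the first and second derivatives at the ball via the support-function/Gauss-map parametrization, split $\psi$ into its mean and mean-zero parts, handle the mean-zero part via the sharp Poincar\'e inequality on $\sfe$ combined with $\rho'\le0$, and reduce the constant part to the dimensional Brunn--Minkowski inequality for dilates of a ball. The only difference is cosmetic: where you invoke a direct one-dimensional argument for the ball case, the paper cites the log-Brunn--Minkowski inequality for balls (Cordero--Fradelizi--Maurey, Saroglou) together with the implication log-BM $\Rightarrow$ dimensional BM from \cite{LMNZ}, \cite{arno}.
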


\begin{theorem}[The log-Brunn-Minkowski inequality near a ball]\label{Log-BM_stability}
Let $\gamma$ be a rotation invariant log-concave measure on $\R^n$.
Let $R\in (0,\infty)$. Let $\varphi\in C^2(\sfe)$ be \textbf{even} and strictly positive. Then there exists a sufficiently small $a>0$ such that for every $\epsilon_1,\epsilon_2\in (0,a)$ and for every $\lambda\in [0,1],$ one has
$$\gamma(K_1^{\lambda} K_2^{1-\lambda})\geq \gamma(K_1)^{\lambda}\gamma(K_2)^{1-\lambda},$$
where $K_1$ is the convex set with the support function $h_1=R\varphi^{\epsilon_1}$ and $K_2$ is the convex set with the support function $h_2=R\varphi^{\epsilon_2}$.
\end{theorem}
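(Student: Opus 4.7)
The plan is to reduce the log-Brunn--Minkowski inequality to the concavity of a single one-parameter curve. Setting $\psi := \log\varphi$, so that $\psi \in C^2(\sfe)$ is even, let $K_t$ denote the convex body with support function $h_t := R\varphi^t = Re^{t\psi}$ (well defined for $t$ in a neighborhood of $0$: at $t=0$ the matrix $D_S^2 h_0 + h_0 I = RI$ is strictly positive, and positivity persists by continuity; here $D_S^2$ and $\nabla_S$ denote the spherical Hessian and gradient, and $I$ is the identity on the tangent space of $\sfe$). Because $h_1^\lambda h_2^{1-\lambda} = Re^{(\lambda\epsilon_1 + (1-\lambda)\epsilon_2)\psi}$, one has $K_1^{\lambda} K_2^{1-\lambda} = K_{\lambda\epsilon_1 + (1-\lambda)\epsilon_2}$, so setting $V(t) := \gamma(K_t)$, the desired inequality is equivalent to concavity of $t \mapsto \log V(t)$ on $[0, a]$. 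I will establish this by showing $(\log V)''(t) \leq 0$ for $t$ in a right-neighborhood of $0$.

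Next I compute $V(0)$, $V'(0)$, $V''(0)$ explicitly. Let $w$ denote the radial density of $\gamma$ (so $w(x)$ depends only on $|x|$). Parametrizing $\partial K_t$ via $x(u,t) = h_t(u)u + \nabla_S h_t(u)$, the area form pulled back to $\sfe$ is $\det(D_S^2 h_t + h_t I)\, d\sigma(u)$. At $t = 0$ the body is $RB$, $\dot h_0 = R\psi$, $\ddot h_0 = R\psi^2$, and $\nabla w(Ru) = w'(R) u$ by rotation invariance. A direct first- and second-variation computation, with one integration by parts on the sphere, yields
$$V(0) = |\sfe|\, J_n, \quad V'(0) = R^n w(R)\, I_0, \quad V''(0) = R^n w(R)[n\,I_2 - G] + R^{n+1} w'(R)\, I_2,$$
where $J_n := \int_0^R w(r)\, r^{n-1}\, dr$, $I_k := \int_{\sfe} \psi^k d\sigma$, and $G := \int_{\sfe} |\nabla_S\psi|^2 d\sigma$.

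Two ingredients then combine. First, decomposing $\psi = c + \tilde\psi$ with $c := I_0/|\sfe|$ and $\int_{\sfe}\tilde\psi\, d\sigma = 0$, the evenness of $\psi$ forces $\tilde\psi$ to be even as well, so its spherical harmonic expansion involves only degrees $k \geq 2$; since the eigenvalues of $-\Delta_S$ on $\sfe$ are $k(k + n - 2)$, the sharp Poincaré bound $G \geq 2n\, \tilde I_2$ holds, where $\tilde I_2 := \int_{\sfe}\tilde\psi^2 d\sigma$. The evenness of $\varphi$ is crucial here --- for general mean-zero functions one would only have the factor $n - 1$, mirroring the necessity of symmetry in Conjecture \ref{Log-BM}. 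Second, I will prove as a lemma that for any rotation-invariant log-concave $\gamma$ on $\R^n$, the function $u \mapsto \log\gamma(e^u B)$ is concave: substituting $r = e^v$ gives $\gamma(e^u B) = |\sfe|\int_{-\infty}^u \widetilde w(v)\, dv$ with $\widetilde w(v) := e^{nv} w(e^v)$, and because $\log w$ is concave and non-increasing on $[0,\infty)$, the map $v \mapsto \log w(e^v)$ is concave; hence $\widetilde w$ is log-concave, so its integral is log-concave by the one-variable Pr\'ekopa--Leindler inequality. Differentiated at $u = \log R$, this reads $(n - \alpha) J_n \leq R^n w(R)$ with $\alpha := -R w'(R)/w(R) \geq 0$.

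Combining, using $R^{n+1} w'(R) = -\alpha R^n w(R)$, $I_2 = c^2|\sfe| + \tilde I_2$, and $I_0^2 = c^2|\sfe|^2$, an algebraic rearrangement gives
$$V(0)V''(0) - V'(0)^2 = c^2 |\sfe|^2 R^n w(R)[(n - \alpha) J_n - R^n w(R)] + R^n w(R)\, |\sfe|\, J_n\,[(n - \alpha)\tilde I_2 - G].$$
The first bracket is $\leq 0$ by the ball log-BM lemma, and the second is $\leq -n\, \tilde I_2 \leq 0$ by the spherical Poincaré bound (since $n - \alpha \leq n \leq 2n$). Hence $(\log V)''(0) \leq 0$, with strict inequality whenever $\tilde\psi \not\equiv 0$. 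In the non-degenerate case, continuity of $V, V', V''$ in $t$ yields $(\log V)''(t) \leq 0$ on $[0, a]$ for sufficiently small $a > 0$. In the degenerate case $\varphi \equiv \mathrm{const}$ (so $\tilde\psi \equiv 0$), $K_t$ is a ball for every $t$ and the conclusion reduces directly to the ball lemma. The main difficulty is this final balancing --- both bracket terms must be non-positive, and the Poincaré constant $2n$ afforded by the evenness of $\varphi$ is exactly what is required to dominate the factor $n$ arising from the homogeneity of the determinant expansion.
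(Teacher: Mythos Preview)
Your proof is correct and follows essentially the same approach as the paper: reduce to log-concavity of the one-parameter family $t \mapsto \gamma(K_t)$, compute the second derivative at $t=0$, split $\psi$ into its mean and mean-zero parts, and handle these via the log-Brunn--Minkowski inequality for balls and the sharp Poincar\'e inequality with constant $2n$ for even mean-zero functions on $\sfe$, respectively. Your treatment is in fact slightly more self-contained than the paper's: you prove the ball log-BM lemma directly via Pr\'ekopa--Leindler (the paper cites \cite{B-conj}, \cite{christos}, \cite{LMNZ}, \cite{arno} for this step), and you make explicit the continuity argument---including the strict-inequality/degenerate-case dichotomy---needed to pass from $(\log V)''(0)\le 0$ to concavity on a full interval $[0,a]$.
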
 
Theorem \ref{Log-BM_stability} can be used to obtain a local uniqueness result for log-Minkowski problem (see B\"or\"oczky, Lutwak, Yang, Zhang \cite{BLYZ}, \cite{BLYZ-1} and the references therein), and the corresponding investigation shall be carried out in a separate manuscript.

\begin{remark}\label{Log-BM_shift}
Theorems \ref{BM_stability} and \ref{Log-BM_stability} indicate a difference between the local behaviors of the dimensional Brunn-Minkowski inequality
and the log-Brunn-Minkowski inequality. Indeed, \eqref{Log-BM_eq} fails for the simplest possible odd perturbation: the shift (which is 
equivalent to chosing $\varphi$ as the restriction of a linear function to $\sfe$). In contrast, by Theorem \ref{BM_stability} the Brunn-Minkowski inequality holds for
radially symmetric log-concave measures when $K$ and $L$ are perturbations, non necessarily even, of $RB_2^n$. 
\end{remark}

\medskip

This paper is structured as follows. Section \ref{preliminaries}
contains some preliminary material for the subsequent part of the paper. In Section \ref{Equivalence lemmas} we discuss the relations between 
the dimensional Brunn-Minkowski inequality and the log-Brunn-Minkowski inequality and their infinitesimal forms. Theorems \ref{BM_stability} and \ref{Log-BM_stability} are proved in Sections \ref{last section} and \ref{very last section}, respectively. Finally, we provide some technical details in the Section \ref{appendix}.

\subsection{Acknowledgement.} The second author would like to thank Fedor Nazarov and Artem Zvavitch for useful discussions. The second author would also like to thank the University of Florence, Italy for the hospitality. The third author would like to thank Georgia Institute of Technology for the hospitality, and was supported in part by the Walter S. Baer and Jeri Weiss CMI Postdoctoral Fellowship. The authors are thankful to the anonymous reviewer for valuable suggestions which helped to improve the presentation of this paper.

\section{Preliminaries}\label{preliminaries}

We work in the $n-$dimensional Euclidean space $\R^n$ with norm $|\cdot|$ and scalar product $\langle \cdot, \cdot\rangle$. We set 
$B_2^n:=\{x\in\R^n\,:\, |x|\leq 1\}$ and $\sfe:=\{x\in\R^n\,:\, |x|=1\}$, to denote the unit ball and the unit sphere, respectively. We shall denote the Lebesgue measure (the {\em volume}) in $\R^n$ by $|\cdot|$. 

We say that a set $A\subset \R^n$ is symmetric if for every $x\in A$ one has $-x\in A$. All measures under consideration will be tacitly assumed to be Radon measures, and all sets will be assumed to be measurable. A measure $\gamma$ on $\R^n$ is called symmetric if for every set 
$S\subset \R^n,$ $\gamma(S)=\gamma(-S)$. If the measure has a density then it is symmetric whenever the density is an even function.

A measure $\gamma$ on $\R^n$ is said to be rotation invariant if for every set $A\subset \R^n$, and for every rotation $T$, 
$\gamma(A)=\gamma(TA)$. If a rotation invariant measure $\gamma$ has a density $F$, we may write $F$ in the form:
$$
F(x)=f(|x|),
$$
for a suitable $f\,:\,[0,\infty)\to[0,\infty)$.

For $K\in\K^n$, the {\em support function} of $K$, $h_K: \sfe\rightarrow \R$, is defined as
$$
h_K(u)=\sup_{x\in K} \langle x, u\rangle.
$$
By the geometric viewpoint, $h_K(u)$ represents the (signed) distance from the origin of the supporting hyperplane to $K$ with outer unit normal $u$. 
We shall use the notation $H_K(x)$ for the 1-homogenous extension of $h_K$, that is,
$$
H_K(x)=
\left\{
\begin{array}{lll}
|x|\,h_K\left(\dfrac{x}{|x|}\right)\quad&\mbox{if $x\ne0$,}\\
0\quad&\mbox{if $x=0$.}
\end{array}
\right.
$$
The function $H_K$ is convex in $\R^n$, for every $K\in\K^n$. Vice versa, for every continuous 1-homogeneous convex function $H$ on $\R^n$, there exists a unique convex body $K$ such that $H=H_K$.

Note that $K\in\K^n$ contains the origin (resp., in its interior) if and only if $h_K \ge 0$ (resp. $h_K>0$) on $\sfe$. 
For convex bodies $K$ and $L$, and for $\alpha$, $\beta\ge0$, we have:
\begin{equation}\label{Mink_support}
h_{\alpha K+\beta L}(u)=\alpha h_K(u)+\beta h_L(u).
\end{equation}

We say that a convex body $K$ is $C^{2,+}$ if $\partial K$ is of class $C^2$ and the Gauss curvature is strictly positive at every $x\in\partial K$. 
In particular, if $K$ is $C^{2,+}$ then it admits outer unit normal $\nu_K(x)$ at every boundary point $x$. Recall that the Gauss map $\nu_K\,:\,\partial K\to\sfe$ is the map assigning the unit normal to each point of $\partial K.$ 

$C^{2,+}$ convex bodies can be characterized through their support function.
We recall that an orthonormal frame on the sphere is a map which associates a collection of $n-1$ orthonormal vectors to every point of $\sfe$. Let $\psi\in C^2(\sfe)$. We denote by $\psi_i(u)$ and $\psi_{ij}(u)$, $i,j\in\{1,\dots,n-1\}$, 
the first and second covariant derivatives of $\psi$ at $u\in\sfe$, with respect to a fixed local orthonormal frame on an open subset of $\sfe$. We define the matrix
\begin{equation}\label{curvature_matrix}
Q(\psi;u)=(q_{ij})_{i,j=1,\dots,n-1}=\left(
\psi_{ij}(u)+\psi(u)\delta_{ij}
\right)_{i,j=1,\dots,n-1},
\end{equation}
where the $\delta_{ij}$'s are the usual Kronecker symbols. On an occasion, instead of $Q(\psi;u)$ we write $Q(\psi)$. Note that $Q(\psi;u)$ is symmetric by standard properties of covariant derivatives. The meaning of this matrix becomes particularly important when 
$\psi$ is the support function of a convex body $K$. In this case we shall call it {\em curvature matrix} of $K$ (see the following Remark \ref{Jacobian}). 
The proof of the following proposition can be deduced from Schneider \cite[Section 2.5]{book4}.

\begin{proposition}\label{equivalent condition} 
Let $K\in\K^n$ and let $h$ be its support function. Then $K$ is of class $C^{2,+}$ if and only if $h\in C^2(\sfe)$ and 
$$
Q(h;u)>0\quad\forall\, u\in\sfe.
$$
\end{proposition}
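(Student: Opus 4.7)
The plan is to establish both directions via the standard correspondence between the boundary of a $C^{2,+}$ convex body and the support function through the (reverse) Gauss map, and to identify the matrix $Q(h;u)$ with the differential of the reverse Gauss map, whose eigenvalues are the principal radii of curvature at the boundary point with outer normal $u$.

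First, suppose $K$ is $C^{2,+}$. Then $\nu_K\colon\partial K\to\sfe$ is a bijection of class $C^1$, and because the Gauss curvature is strictly positive, the inverse function theorem implies that $\nu_K^{-1}\colon\sfe\to\partial K$ is also $C^1$. I would identify this reverse Gauss map with the gradient of the $1$-homogeneous extension $H$ of $h$: at any $u\in\sfe$, the unique point of $\partial K$ with outer normal $u$ is $\nabla H(u)$, which follows from the fact that the supporting hyperplane in direction $u$ meets $K$ at exactly that point. Since $\nabla H$ is then $C^1$ on $\sfe$, $H$ is $C^2$ on $\R^n\setminus\{0\}$, hence $h\in C^2(\sfe)$.

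Next, working in a local orthonormal frame $\{e_1,\dots,e_{n-1}\}$ on an open set of $\sfe$, I would compute $d(\nabla H)(u)$ restricted to $T_u\sfe$ and verify, using standard identities for covariant derivatives on the sphere together with the Euler relation for $H$, that in this frame
\[
\bigl(d(\nabla H)(u)\bigr)_{ij} = h_{ij}(u) + h(u)\delta_{ij} = q_{ij}(u).
\]
So $Q(h;u)$ is the matrix, in this frame, of the differential of the reverse Gauss map. Because the reverse Gauss map is a local diffeomorphism with the same orientation as the outward normal (its image lies in the convex boundary of $K$), its differential is not merely invertible but positive definite; equivalently, the principal radii of curvature at $\nabla H(u)$ are positive, which gives $Q(h;u)>0$.

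For the converse, assume $h\in C^2(\sfe)$ with $Q(h;u)>0$ for all $u$. Define $X\colon\sfe\to\R^n$ by $X(u)=\nabla H(u)$. Using the identification above, $dX(u)$ is represented (in the local frame) by $Q(h;u)$, hence is a linear isomorphism $T_u\sfe\to u^\perp$ with positive determinant. By the inverse function theorem $X$ is a $C^1$ diffeomorphism onto a compact embedded $C^2$ hypersurface $\Sigma$ with outer unit normal $u$ at $X(u)$, whose principal radii of curvature are the eigenvalues of $Q(h;u)$, all strictly positive, so the Gauss curvature is strictly positive. A short argument then checks that $\Sigma=\partial K$: since for each $u\in\sfe$ the point $X(u)$ lies in the supporting hyperplane $\{\langle\cdot,u\rangle=h(u)\}$ of $K$ and $\Sigma$ is a closed convex $C^{2,+}$ hypersurface whose support function coincides with $h$, it must be the boundary of $K$.

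The main technical obstacle is the frame-level computation establishing the identification $d(\nabla H)=Q(h)$ on $T_u\sfe$; once this is in hand, both implications follow from inverse function theorem arguments. This line of reasoning is carried out in detail in Schneider, Section 2.5, to which the statement is attributed.
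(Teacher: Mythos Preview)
The paper does not actually prove this proposition: it simply states that the proof ``can be deduced from Schneider \cite[Section 2.5]{book4}'' and moves on. Your sketch outlines precisely the standard argument found there---identifying $\nabla H$ with the reverse Gauss map, recognizing $Q(h;u)$ as its differential in a local orthonormal frame (hence as the reverse Weingarten map whose eigenvalues are the principal radii of curvature), and using the inverse function theorem in both directions---so your approach is entirely aligned with the reference the paper cites, and you correctly flag the one genuinely technical computation (the frame-level identity $d(\nabla H)|_{T_u\sfe}=Q(h;u)$).
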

In view of the previous results it is convenient to introduce the following set of functions
$$
\C=\{h\in C^2(\sfe)\,:\,Q(h;u)>0\,\forall\, u\in\sfe\}.
$$
Hence $\C$ is the set of support functions of convex bodies of class $C^{2,+}$. 

\begin{remark}\label{Jacobian}
Let $K$ be a $C^{2,+}$ convex body. Then $\nu_K\,:\partial K\to\sfe$ is a diffeomorphism. 
The matrix $Q(h;u)$ represents the inverse of the Weingarten map at $x=\nu_K^{-1}(u)$, and its eigenvalues are the principal radii of curvature of $\partial K$ at $x$. Consequently we have
$$
\det(Q(h;u))=\frac1{G(x)}
$$
where $G$ denotes the Gauss curvature.
\end{remark}

Let $K$ be a $C^{2,+}$ convex body, with support function $h_K$ and its homogenous extension $H_K$.
$H_K$ is of class $C^1(\R^n\setminus\{0\})$. By $\nabla H_K$ we denote its gradient with respect to Cartesian coordinates. The following useful relation holds: for every $u\in\sfe$, $\nabla H_K(u)$ is the (unique) point on $\partial K$ where the outer unit normal is $u$:
$$
\nabla H_K(u)=\nu_K^{-1}(u)\quad\forall\,u\in\sfe.
$$ 
In other words,
$$\langle \nabla H_K(u), \nu_K(u)\rangle=H_K(u)\quad\forall\,u\in\sfe.$$

\begin{remark}\label{gradients} Let $\psi\in C^1(\sfe)$. The notation $\nabla_\sigma \psi$ stands for the {\em spherical gradient} of $\psi$, i.e. the vector
$(\psi_1,\dots,\psi_{n-1})$, where $\psi_i$ are the covariant derivatives of $\psi$ with respect to the $i$-th element of a fixed orthonormal system
on $\sfe$. Let $\Phi$ be the 1-homogeneous extension of $\psi$ to $\R^n$. Then we have
\begin{equation}
|\nabla\Phi(u)|^2=\psi^2(u)+|\nabla_\sigma\psi(u)|^2
\end{equation}
for every $u\in\sfe$.
\end{remark}

\section{Infinitesimal versions of inequalities.}\label{Equivalence lemmas}

We denote the family of centrally symmetric convex bodies by $\K^n_s$. The notation $C^{2,+}_e(\sfe)$ will stand for the set of support functions of centrally symmetric $C^{2,+}$ convex bodies, i.e. functions from $C^{2,+}(\sfe)$ which are additionally even.

Let $h$ be the support function of a $C^{2,+}$ convex body $K$, and let $\psi\in C^2(\sfe)$; then, by Proposition
\ref{equivalent condition},
\begin{equation}\label{additive perturbation}
h_s:=h+s\psi\in C^{2,+}(\sfe)
\end{equation}
if $s$ is sufficiently small, say $|s|\leq a$ for some appropriate $a>0$. Hence for every $s$ in this range there 
exists a unique $C^{2,+}$ convex body $K_s$ with the support function $h_s$. 
For an interval $I$, we define the one-parameter family of convex bodies: 
$$
\textbf{K}(h,\psi,I):=\{K_s\,:\, h_{K_s}=h+s\psi,\, s\in I\}.
$$

\begin{lemma}\label{key_lemma_BM}
Assume that $\gamma$ is a symmetric log-concave measure with continuously differentiable density. Conjecture \ref{Gauss-BM} holds for $\gamma$ if and only if for every one-parameter family $\textbf{K}(h,\psi,I)$, with even $h$ and $\psi$, 
\begin{equation}\label{dimbmeq}
\left. \frac{d^2}{ds^2}\left[\gamma(K_s)\right]\right|_{s=0}\cdot \gamma(K_0)\le \frac{n-1}{n}\left(\left. \frac{d}{ds}\left[\gamma(K_s)\right]\right|_{s=0}\right)^2.
\end{equation}
In particular, if (\ref{dimbmeq}) holds for $K_s$ in a fixed family $\textbf{K}(h,\psi,I)$, then Conjecture \ref{Gauss-BM} holds for all sets $K_s$ in that family.
\end{lemma}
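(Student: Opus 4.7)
The plan is to recognize \eqref{dimbmeq} as a pointwise second-order inequality equivalent to $\bigl(\gamma(K_s)^{1/n}\bigr)''\leq 0$ at $s=0$, and to read Conjecture \ref{Gauss-BM} as the concavity of $s\mapsto\gamma(K_s)^{1/n}$ along Minkowski segments. Setting $g(s):=\gamma(K_s)$ and $f(s):=g(s)^{1/n}$, a direct differentiation gives
\begin{equation*}
f''(s)=\frac{1}{n}\,g(s)^{\frac{1}{n}-2}\left(g(s)\,g''(s)-\frac{n-1}{n}\,g'(s)^{2}\right),
\end{equation*}
so that \eqref{dimbmeq} is exactly the statement $f''(0)\leq 0$. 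A preliminary task is to verify that $s\mapsto g(s)$ is of class $C^{2}$ wherever $h+s\psi\in\C$; I would do this by writing $\gamma(K_s)=\int_{K_s}F\,dx$, pushing to $\sfe$ via the Gauss map (whose Jacobian is $\det Q(h+s\psi;\cdot)$), and differentiating under the integral.

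For the forward direction, I would assume Conjecture \ref{Gauss-BM} for $\gamma$ and fix a family $\textbf{K}(h,\psi,I)$ with $h,\psi$ both even. Then every $h+s\psi$ is even, so each $K_s$ is centrally symmetric; moreover \eqref{Mink_support} yields
\begin{equation*}
K_{\lambda s_{1}+(1-\lambda)s_{2}}=\lambda K_{s_{1}}+(1-\lambda)K_{s_{2}},\qquad s_1,s_2\in I,\ \lambda\in[0,1].
\end{equation*}
Applying \eqref{Log_conc_BM} to $K_{s_1},K_{s_2}$ produces $f\bigl(\lambda s_1+(1-\lambda)s_2\bigr)\geq \lambda f(s_1)+(1-\lambda)f(s_2)$; hence $f$ is concave on $I$, so $f''(0)\leq 0$ and \eqref{dimbmeq} follows.

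For the converse, the strategy is to promote the infinitesimal hypothesis to $f''(s_0)\leq 0$ at \emph{every} point of any admissible even family, via reparametrization. Given $K,L\in\K^n_s$, a standard $C^{2,+}$ approximation reduces matters to $K,L$ symmetric and of class $C^{2,+}$. Set $h:=h_K$ and $\psi:=h_L-h_K$, both even; since $Q(h+s\psi;\cdot)=(1-s)Q(h_K;\cdot)+s\,Q(h_L;\cdot)$ stays positive definite for $s\in[0,1]$, the family $\textbf{K}(h,\psi,[0,1])$ is well defined and coincides with the Minkowski segment from $K$ to $L$. To get $f''(s_0)\leq 0$ at arbitrary $s_0\in[0,1]$, I would take $\tilde h:=h+s_0\psi$ (still even and in $\C$) as the new base point and apply the hypothesis to $\textbf{K}(\tilde h,\psi,\tilde I)$ at parameter $0$; a translation in $s$ converts this into $f''(s_0)\leq 0$. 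Thus $f$ is concave on $[0,1]$, and evaluating at $s=1-\lambda$ recovers \eqref{Log_conc_BM} for $K,L$. The same reparametrization trick along a fixed family delivers the ``in particular'' clause.

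The main obstacle I foresee is the regularity step underlying the whole argument: showing that $g$ is genuinely of class $C^{2}$, with derivatives expressible in terms of $F,\nabla F$ and the entries of $Q(h+s\psi;\cdot)$. Although routine for $C^{1}$ densities and $C^{2,+}$ support functions, it has to be set up cleanly enough that ``$f''(s_0)\leq 0$ for every $s_0$'' actually forces concavity of $f$. I would carry this out by differentiation under the integral after the push to $\sfe$, using that $Q(h+s\psi;\cdot)$ is uniformly positive definite on any compact sub-interval of admissible $s$, and defer the detailed computations to Section \ref{appendix}.
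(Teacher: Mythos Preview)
Your proposal is correct and follows essentially the same approach as the paper: both directions hinge on recognizing that \eqref{dimbmeq} is $f''(0)\le 0$ for $f=\gamma(K_s)^{1/n}$, using the linearity $K_{\lambda s+(1-\lambda)t}=\lambda K_s+(1-\lambda)K_t$ for the forward implication, and the base-point shift $\tilde h=h+s_0\psi$ to propagate the second-derivative inequality along the interval for the converse. You are in fact more careful than the paper in spelling out the $C^{2,+}$ approximation and the regularity of $g$, but the core argument is the same.
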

\begin{proof}
Assume first that $\gamma$ satisfies \eqref{Log_conc_BM} on the system $\textbf{K}(h,\psi,I)$. 
Then the equality $h_{K_s}=h+s\psi$, $s\in I$, and the linearity of support function with respect to Minkowski addition, 
imply that for every $s,t\in I$ and for every $\lambda\in[0,1]$
$$
K_{\lambda s+(1-\lambda)t}=\lambda K_s+(1-\lambda)K_t.
$$
By (\ref{Log_conc_BM}),
$$
\gamma(K_{\lambda s+(1-\lambda)t})^{\frac{1}{n}}=\gamma(\lambda K_s+(1-\lambda)K_t)^{\frac{1}{n}}\geq 
\lambda \gamma(K_s)^{\frac{1}{n}}+(1-\lambda)\gamma(K_t)^{\frac{1}{n}},
$$
which means that the function $\gamma(K_s)^{\frac{1}{n}}$ is concave on $I$. Inequality \eqref{dimbmeq} follows.

Conversely, suppose that for every system $\textbf{K}(h,\psi,I)$ the function $\gamma(K_s)^{\frac{1}{n}}$ has 
non-positive second derivative at $0$, i.e. \eqref{dimbmeq} holds. We observe that this implies concavity of $\gamma(K_s)^{\frac{1}{n}}$ on 
the entire interval $I$. Indeed, given $s_0$ in the interior of $I$, 
consider $\tilde{h}=h+s_0\psi$, and define a new system $\mathbf{\tilde{K}}(\tilde{h},\psi,J)$, where $J$ is a new interval such that
$\tilde{h}+s\psi=h+(s+s_0)\psi\in C^{2,+}$ for every $s\in J$. Then the second derivative of
$\gamma(K_s)^{\frac{1}{n}}$ at $s=s_0$ is negative, as it is equal to the second derivative of $\gamma(\tilde{K}_s)^{\frac{1}{n}}$ at $s=0$.
On the other hand, the concavity $\gamma(K_s)^{\frac{1}{n}}$ on the family $\textbf{K}(h,\psi,I)$ is equivalent to the validity of \eqref{Log_conc_BM} on this 
family.
\end{proof}

\medskip

A similar approach can be used for the log-Brunn-Minkowski inequality. In order to do this we introduce a corresponding type of one-parameter families 
of convex bodies. In this case, additive perturbations are replaced by multiplicative perturbations.

Let $h\in C^{2,+}(\sfe)$ and $\varphi\in C^2(\sfe)$, with $\varphi>0$ on $\sfe$. Then there exists $a>0$ such that
$$
h_s:=h\,\varphi^s\in C^{2,+}(\sfe)\quad\forall\, s\in[-a,a].
$$
In particular, by Proposition \ref{equivalent condition}, for every $s\in[-a,a]$ there exists a $C^{2,+}$ convex body $Q_s$ whose support function is $h_s$.

We introduce the corresponding 1-dimensional systems. 
$$
\textbf{Q}(h,\varphi,I):=\{Q_s\in\K^n\,:\, h_{Q_s}=h\varphi^s,\,s\in I\}.
$$
\begin{lemma}\label{key_lemma_Log-BM}
Let $\gamma$ be a symmetric log-concave measure with continuously differentiable density.  Assume that Conjecture \ref{Log-BM} holds for a measure $\gamma$, i.e. \eqref{Log-BM-general} is valid for every pair of symmetric convex sets $K$ and $L$ and for every $\lambda\in[0,1]$.
Then for every one-parameter family $Q_s\in \textbf{Q}(h,\varphi,I)$, with $h$ and $\varphi$ even, 
\begin{equation}\label{concavity at zero2}
\left.\frac{d^2}{ds^2}\log(\gamma(Q_s))\right|_{s=0}\le0.
\end{equation}
The converse is true locally: if (\ref{concavity at zero2}) holds for all $Q_s$ in a fixed family $\textbf{Q}(h,\varphi,I)$, then Conjecture \ref{Log-BM} holds for all sets $Q_s$ in $\textbf{Q}(h,\varphi,[0,\epsilon])$ for a small enough interval $[0,\epsilon]\subset I$.
\end{lemma}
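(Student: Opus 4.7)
The plan is to mirror the proof of Lemma \ref{key_lemma_BM}, with the additive (Minkowski) interpolation replaced by the log-linear interpolation that defines $K^\lambda L^{1-\lambda}$. The whole argument hinges on a single observation: inside a multiplicative family $\textbf{Q}(h,\varphi,I)$, the geometric mean of two members is again a member of the family, with the linearly interpolated parameter. Concretely, for $s,t\in I$ and $\lambda\in[0,1]$,
$$
h_{Q_s}^{\lambda}\,h_{Q_t}^{1-\lambda}
= h^{\lambda+(1-\lambda)}\,\varphi^{\lambda s+(1-\lambda)t}
= h\,\varphi^{\lambda s+(1-\lambda)t}
= h_{Q_{\lambda s+(1-\lambda)t}}.
$$
The right-hand side belongs to $\C$ by the defining property of $I$, so it is a genuine support function; consequently the set $Q_s^{\lambda}Q_t^{1-\lambda}$ defined via the inequality $\langle x,u\rangle\le h_{Q_s}^{\lambda}h_{Q_t}^{1-\lambda}(u)$ coincides with $Q_{\lambda s+(1-\lambda)t}$. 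Moreover, since $h$ and $\varphi$ are even, every $Q_s$ is centrally symmetric.

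For the forward direction, I would apply inequality \eqref{Log-BM-general} (which is the hypothesis that Conjecture \ref{Log-BM} holds for $\gamma$) to the symmetric pair $(Q_s,Q_t)$ and then use the identity above to get
$$
\gamma(Q_{\lambda s+(1-\lambda)t})
= \gamma(Q_s^{\lambda}Q_t^{1-\lambda})
\ge \gamma(Q_s)^{\lambda}\,\gamma(Q_t)^{1-\lambda}.
$$
Taking logarithms shows that $s\mapsto \log\gamma(Q_s)$ is concave on $I$, which yields (\ref{concavity at zero2}) at once.

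For the converse, I would use the reparametrization trick that already appears in the proof of Lemma \ref{key_lemma_BM}. Given $s_0\in[0,\epsilon]$ with $\epsilon$ small enough, introduce the translated family $\tilde Q_s:=Q_{s_0+s}$. Its support function is $(h\varphi^{s_0})\,\varphi^{s}$, so it is a family of the type $\textbf{Q}(h\varphi^{s_0},\varphi,J)$ for a suitable interval $J\ni 0$, and both $h\varphi^{s_0}$ and $\varphi$ remain even. Applying the standing hypothesis to this new family gives
$$
\left.\frac{d^2}{ds^2}\log\gamma(Q_s)\right|_{s=s_0}
= \left.\frac{d^2}{ds^2}\log\gamma(\tilde Q_s)\right|_{s=0}
\le 0,
$$
and as $s_0$ ranges over $[0,\epsilon]$ this shows that $\log\gamma(Q_s)$ is concave on $[0,\epsilon]$. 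By the identity $Q_s^{\lambda}Q_t^{1-\lambda}=Q_{\lambda s+(1-\lambda)t}$ established in the first step, this concavity is exactly the log-Brunn–Minkowski inequality \eqref{Log-BM-general} for pairs in $\textbf{Q}(h,\varphi,[0,\epsilon])$.

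There is no genuine obstacle here: the proof is a routine adaptation of Lemma \ref{key_lemma_BM}, and the only nontrivial point to check carefully is the equality $Q_s^{\lambda}Q_t^{1-\lambda}=Q_{\lambda s+(1-\lambda)t}$, which in turn relies on the fact that the candidate support function $h\varphi^{\lambda s+(1-\lambda)t}$ lies in $\C$ — a fact guaranteed by the definition of the interval $I$.
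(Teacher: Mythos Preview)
Your proposal is correct and follows essentially the same approach as the paper: the key identity $h_{\lambda s+(1-\lambda)t}=h_s^{\lambda}h_t^{1-\lambda}$, hence $Q_{\lambda s+(1-\lambda)t}=Q_s^{\lambda}Q_t^{1-\lambda}$, and the deduction of log-concavity of $s\mapsto\gamma(Q_s)$ from \eqref{Log-BM-general}. The paper's written proof is in fact briefer than yours, giving only the forward direction explicitly; your treatment of the converse via the reparametrization $\tilde h=h\varphi^{s_0}$ is exactly the analogue of the argument in Lemma~\ref{key_lemma_BM} and is the intended completion.
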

\begin{proof}
Let $h\in C^{2,+}(\sfe)$ and $\varphi\in C^{2}(\sfe)$ be strictly
positive even functions on $\sfe$; there exists $a>0$ such that $h_s:=h \varphi^s$ is the support function of a convex body $Q_s$
for all $s\in [-a,a]$. Note that for $s,t\in [-a,a]$ we get 
$$
h_{\lambda s+(1-\lambda) t} = h_s^{\lambda} h_{t}^{1-\lambda},
$$
and thus
$$
Q_{\lambda s+(1-\lambda) t} = Q_s^{\lambda} Q_{t}^{1-\lambda}.
$$
If the Conjecture \ref{Log-BM} is true, then
$$
\gamma(Q_{\lambda s+(1-\lambda) t}) = \gamma(Q_s^{\lambda} Q_{t}^{1-\lambda})\geq \gamma(Q_s)^{\lambda} \gamma(Q_{t})^{1-\lambda},
$$
which means that $\gamma(Q_s)$ is log-concave in $[-a,a]$.
\end{proof}

\section{Proof of Theorem \ref{BM_stability}}\label{last section}
The following Lemma is the key step in proving Theorem \ref{BM_stability}. To prove it, we express a measure of a convex set in terms of its support function and run a long and technical computation, involving integration by parts; the complete proof is outlined in the Section \ref{appendix}.
\begin{lemma}\label{L-T-1}
Let $R>0$. Let $\gamma$ be a rotation invariant measure with density $f(|x|)$, and let $A=\int_0^1t^{n-1}f(Rt)dt.$ In the case $h_K=R$, (\ref{dimbmeq}) is equivalent to the validity of the following inequality for every $\psi\in C^{2}(\sfe)$:
\begin{equation}\label{BM_main_ineq}
\begin{aligned}
& \frac{Af(R)}{|\sfe|} \left( (n-1) \int_\sfe\psi^2du - \int_\sfe|\nabla_\sigma\psi|^2du \right) + \frac{AR f'(R)}{|\sfe|}\int_\sfe\psi^2du \leq \\
&\frac{n-1}{n}f(R)^2 \left(\frac{1}{|\sfe|}\int_\sfe\psi du\right)^2.
\end{aligned}
\end{equation}
\end{lemma}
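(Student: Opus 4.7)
The plan is to reduce (\ref{dimbmeq}) to an explicit inequality on $\sfe$ by expressing $\gamma(K_s)$ as a single integral over the unit sphere depending on $h_s=R+s\psi$, Taylor-expanding to order $s^2$, and reading off $\gamma(K_0)$, $\gamma'(0)$, $\gamma''(0)$.

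For the integral representation, set $G(r)=\int_0^r f(t)t^{n-1}\,dt$ and $\phi(r)=G(r)/r^n$. A direct check gives $\mathrm{div}(\phi(|x|)x)=f(|x|)$, so by the divergence theorem
\begin{equation*}
\gamma(K)=\int_{\partial K}\phi(|x|)\,\langle x,\nu_K(x)\rangle\,d\h(x).
\end{equation*}
Using the Gauss-map parameterization $x=\nabla H_K(u)$ with $u\in\sfe$, together with Euler's relation $\langle\nabla H_K(u),u\rangle=h_K(u)$ and the surface element $d\h=\det Q(h_K;u)\,du$ from Remark \ref{Jacobian}, this becomes
\begin{equation*}
\gamma(K_s)=\int_\sfe h_s(u)\,\phi\bigl(|\nabla H_s(u)|\bigr)\,\det Q(h_s;u)\,du.
\end{equation*}
Two identities that fall out of the definition of $\phi$ and will drive the simplification later are $n\phi(r)+r\phi'(r)=f(r)$ and $(n+1)\phi'(r)+r\phi''(r)=f'(r)$.

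Now I would expand each of the three factors to $O(s^2)$. Remark \ref{gradients} applied to $h_s$ gives $|\nabla H_s(u)|^2=h_s^2+s^2|\nabla_\sigma\psi|^2$, whence $|\nabla H_s|=R+s\psi+\tfrac{s^2}{2R}|\nabla_\sigma\psi|^2+O(s^3)$; then Taylor-expand $\phi$ around $R$. Since $Q(h_s;u)=RI+sQ(\psi;u)$, the expansion $\det(I+\epsilon N)=1+\epsilon\,\trace N+\tfrac{\epsilon^2}{2}[(\trace N)^2-\trace(N^2)]+O(\epsilon^3)$ yields $\det Q(h_s)$ to second order, with $\trace Q(\psi;u)=\Delta_\sigma\psi+(n-1)\psi$. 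Multiplying the three factors and integrating over $\sfe$, the first-order contribution $\int_\sfe \Delta_\sigma\psi\,du=0$ vanishes, and $n\phi(R)+R\phi'(R)=f(R)$ collapses the rest to $\gamma'(0)=R^{n-1}f(R)\int_\sfe\psi\,du$.

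For the second-order term, three spherical integral identities carry the work: integration by parts $\int_\sfe \psi\Delta_\sigma\psi\,du=-\int_\sfe|\nabla_\sigma\psi|^2\,du$; the Bochner/Reilly identity on $\sfe$,
\begin{equation*}
\int_\sfe(\Delta_\sigma\psi)^2\,du-\int_\sfe|\nabla_\sigma^2\psi|^2\,du=(n-2)\int_\sfe|\nabla_\sigma\psi|^2\,du,
\end{equation*}
which reduces $\int_\sfe[(\trace Q(\psi))^2-\trace(Q(\psi)^2)]\,du$ to $(n-2)\bigl[(n-1)\int\psi^2-\int|\nabla_\sigma\psi|^2\bigr]$; and the arithmetic identity $f'(R)=(n+1)\phi'(R)+R\phi''(R)$. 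Combining the contributions from the $\phi(R)$, $\phi'(R)$, $\phi''(R)$ pieces and cancelling, one obtains
\begin{equation*}
\gamma''(0)=\bigl[(n-1)R^{n-2}f(R)+R^{n-1}f'(R)\bigr]\int_\sfe\psi^2\,du-R^{n-2}f(R)\int_\sfe|\nabla_\sigma\psi|^2\,du.
\end{equation*}
Since $\gamma(K_0)=|\sfe|R^nA$, substituting $\gamma(K_0)$, $\gamma'(0)$, $\gamma''(0)$ into (\ref{dimbmeq}) and dividing by $|\sfe|R^{2n-2}$ produces (\ref{BM_main_ineq}) on the nose; the computation is reversible, giving equivalence. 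The main obstacle is the careful bookkeeping at second order: one must track the cross contributions $A_0B_2$, $A_1B_1$, $A_2B_0$ coming from the three expanded factors simultaneously and apply the Reilly identity in exactly the right form to turn a pointwise identity involving the Hessian $\nabla_\sigma^2\psi$ into an integrated identity involving only $\int\psi^2$ and $\int|\nabla_\sigma\psi|^2$.
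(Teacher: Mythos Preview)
Your proposal is correct and leads to the same formulas for $\gamma(K_0)$, $\gamma'(0)$, and $\gamma''(0)$ as the paper obtains; your $\phi(R),\phi'(R),\phi''(R)$ are exactly the paper's $A,B,C$, and your two arithmetic identities are the paper's integration-by-parts relations $nA+RB=f(R)$ and $(n+1)B+RC=f'(R)$.

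The route, however, is genuinely different. The paper derives $g'(s)$ and $g''(0)$ for a \emph{general} support function $h$ using the cofactor calculus and the Cheng--Yau lemma (and its extension from \cite{Colesanti-Hug-Saorin2}): the second-order term in the determinant is handled by the integration-by-parts identity \eqref{ibp2} together with the homogeneity relation \eqref{homog2}, which turns the $c_{ij,kl}$ contribution back into a $c_{ij}$ contribution; only then does the paper specialize to $h\equiv R$. You instead specialize to $h_s=R+s\psi$ from the outset, use the elementary expansion $\det(I+\epsilon N)=1+\epsilon\,\trace N+\tfrac{\epsilon^2}{2}\bigl[(\trace N)^2-\trace(N^2)\bigr]+O(\epsilon^3)$, and then invoke the Bochner/Reilly identity on $\sfe$ to convert $\int(\Delta_\sigma\psi)^2-\int|\nabla_\sigma^2\psi|^2$ into $(n-2)\int|\nabla_\sigma\psi|^2$. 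Both mechanisms perform the same job---collapsing the Hessian-squared terms---but yours avoids the Cheng--Yau machinery entirely, at the cost of being tied to the special base point $h\equiv R$; the paper's framework is heavier but is reusable for arbitrary $h$ (and indeed feeds directly into Theorem \ref{Log_BM_infinitesimal_main_TH}).
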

By Lemma \ref{key_lemma_BM}, to prove the Theorem, it suffices to show the validity of (\ref{BM_main_ineq}). Let us denote the quadratic operators appearing in the left-hand side and in the right-hand side of the inequality (\ref{BM_main_ineq}) by $B_1(\psi)$ and $B_2(\psi)$, correspondingly. That is,
$$B_1(\psi)=\frac{Af(R)}{|\sfe|} \left( (n-1) \int_\sfe\psi^2du - \int_\sfe|\nabla_\sigma\psi|^2du \right) + \frac{AR f'(R)}{|\sfe|}\int_\sfe\psi^2du,$$
and
$$B_2(\psi)=\frac{n-1}{n}f(R)^2 \left(\frac{1}{|\sfe|}\int_\sfe\psi du\right)^2.$$
The next step is to decompose $\psi$ as the sum of a constant function and a function which is orthogonal to constant functions. Let us write
$$
\psi=\psi_0+\psi_1
$$
where
$$
\psi_0=\frac1{|\sfe|}\int_\sfe\psi du \quad \mbox{and} \quad \int_\sfe\psi_1 du=0.
$$
Note that
$$
\int_\sfe\psi^2d\sigma=\int_\sfe \psi_0^2d\sigma+\int_\sfe\psi_1^2d\sigma.
$$
Therefore,
$$
B_1(\psi)=B_1(\psi_0)+B_1(\psi_1),
$$
as well as
$$
B_2(\psi)=B_2(\psi_0)+B_2(\psi_1).
$$

Since $\gamma$ is radially symmetric, one has $f' \leq 0$. Moreover, by the standard Poincar\'e inequality on the unit sphere,
\begin{equation}
(n-1)\int_\sfe\psi^2du - \int_\sfe|\nabla_\sigma\psi|^2du \leq 0,
\end{equation}\label{Poincare}
for every $\psi$ such that
\begin{equation}\label{condition}
\int_\sfe\psi du = 0.
\end{equation}
Thus
$$
B_1(\psi_1)\le0=B_2(\psi_1).
$$
To prove \eqref{BM_main_ineq} it remains to show that
\begin{equation}\label{main4}
B_1(\psi_0)\le B_2(\psi_0).
\end{equation}
This condition is equivalent to
\begin{equation}\label{BM_balls}
\gamma(\lambda r_1 B_2^n+(1-\lambda) r_2 B_2^n)^{\frac{1}{n}}\geq \lambda \gamma( r_1 B_2^n)^{\frac{1}{n}}+(1-\lambda)\gamma(r_2 B_2^n)^{\frac{1}{n}},
\end{equation}
for some $r_1, r_2\in [R, R+\epsilon]$. As was shown in \cite{LMNZ} (see also the third named author \cite{arno}), this statement follows from log-Brunn-Minkowski conjecture in the case of log-concave spherically invariant measures and when $K$ and $L$ are Euclidean balls. The latter is indeed true: it follows from the results of \cite{B-conj} and \cite{christos}. 

\section{Proof of the Theorem \ref{Log-BM_stability}}\label{very last section}

As before, we start with a Lemma, which shall be rigorously proved in Section \ref{appendix}. 

\begin{lemma}\label{L-T-2}
Let $R>0$. Let $\gamma$ be a rotation invariant measure with density $f(|x|)$, and let $A=\int_0^1t^{n-1}f(Rt)dt.$ In the case $h_K=R$, (\ref{concavity at zero2}) is equivalent to the following inequality:
\begin{equation}\label{Log-BM_main_ineq}
\begin{aligned}
&A\left[nf(R)+R f'(R)\right]\frac{1}{|\sfe|}\int_\sfe\psi^2du-Af(R)\frac{1}{|\sfe|}\int_\sfe|\nabla_\sigma\psi|^2du\leq \\
&f(R)^2 \left(\frac{1}{|\sfe|}\int_\sfe\psi d\sigma\right)^2,
\end{aligned}
\end{equation}
for every even $\psi\in C^2(\sfe)$.
\end{lemma}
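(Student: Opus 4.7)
The plan is to reduce the multiplicative perturbation to the additive one already analyzed in Lemma \ref{L-T-1}, by means of a second-order Taylor expansion in $s$. Set $\psi:=\log\varphi\in C^2(\sfe)$; since $\varphi$ is even and strictly positive, $\psi$ is even. The expansion
\[
h_s \;=\; R\varphi^s \;=\; Re^{s\psi} \;=\; R+sR\psi+\tfrac{s^2}{2}R\psi^2+O(s^3)
\]
gives $\dot h_0=R\psi$ and $\ddot h_0=R\psi^2$, where dots denote derivatives with respect to $s$ at $s=0$. The term $\ddot h_0=R\psi^2$ is the distinctive feature of the multiplicative perturbation: it is absent in the additive case considered in Lemma \ref{L-T-1}, and its presence is precisely what produces the different coefficient in (\ref{Log-BM_main_ineq}) compared with (\ref{BM_main_ineq}).

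Introduce the functional $\Gamma(h):=\gamma(K_h)$, where $K_h$ is the convex body with support function $h$, so that $\gamma(Q_s)=\Gamma(h_s)$ and $\gamma_0:=\Gamma(R)=\gamma(RB_2^n)=|\sfe|R^nA$. Condition (\ref{concavity at zero2}), namely $\partial_s^2\log\gamma(Q_s)|_{s=0}\leq 0$, is equivalent to
\[
\ddot\gamma_0\cdot\gamma_0 \;\leq\; (\dot\gamma_0)^2.
\]
By the chain rule applied to $s\mapsto\Gamma(h_s)$,
\[
\dot\gamma_0 \;=\; D\Gamma(R)[R\psi],\qquad \ddot\gamma_0 \;=\; D^2\Gamma(R)[R\psi,R\psi]+D\Gamma(R)[R\psi^2],
\]
where $D\Gamma(R)$ and $D^2\Gamma(R)$ are the first and second Fr\'echet derivatives of $\Gamma$ at the constant function $R$.

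The main ingredient is furnished by the appendix, where in the course of proving Lemma \ref{L-T-1} one derives the two formulas, valid at $h=R$ for every $\eta\in C^2(\sfe)$:
\[
D\Gamma(R)[\eta] \;=\; R^{n-1}f(R)\int_\sfe \eta\,du,
\]
together with a quadratic expression $D^2\Gamma(R)[\eta,\eta]$ which, upon multiplication by $\gamma_0=|\sfe|R^{n}A$ and rescaling by $R^{2n-2}|\sfe|^2$, reproduces the left-hand side of (\ref{BM_main_ineq}); more explicitly, $D^2\Gamma(R)[\eta,\eta]=R^{n-2}\bigl[(n-1)f(R)\int\eta^2-f(R)\int|\nabla_\sigma\eta|^2+Rf'(R)\int\eta^2\bigr]$. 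Establishing these formulas---especially the quadratic one, which requires writing $\gamma(K_h)$ via the curvature matrix $Q(h;u)$, expanding around $h=R$, and integrating by parts on $\sfe$ to produce the $|\nabla_\sigma\eta|^2$ term---is the main technical obstacle; it is done once in the appendix and serves both lemmas.

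Given those two formulas, Lemma \ref{L-T-2} reduces to a substitution. Plugging $\eta=R\psi$ into the second variation and $\eta=R\psi^2$ into the first variation and using bilinearity, the extra contribution $D\Gamma(R)[R\psi^2]=R^nf(R)\int\psi^2$ combines with the $(n-1)R^nf(R)\int\psi^2$ produced by $D^2\Gamma(R)[R\psi,R\psi]$ to yield the coefficient $nf(R)$ in front of $\int\psi^2$ in (\ref{Log-BM_main_ineq}), while the terms $R^{n+1}f'(R)\int\psi^2$ and $-R^nf(R)\int|\nabla_\sigma\psi|^2$ pass through unchanged. Assembling $\ddot\gamma_0\cdot\gamma_0$ and $(\dot\gamma_0)^2=R^{2n}f(R)^2(\int\psi)^2$, and dividing both sides of $\ddot\gamma_0\cdot\gamma_0\leq (\dot\gamma_0)^2$ by $R^{2n}|\sfe|^2$, produces exactly (\ref{Log-BM_main_ineq}). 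Each step in this chain is reversible, establishing the claimed equivalence.
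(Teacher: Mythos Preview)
Your proposal is correct and follows essentially the same route as the paper. The paper packages the comparison between the multiplicative and additive perturbations into a separate lemma (Lemma \ref{BM->Log-BM}), observing that $h\varphi^s=h+sh\log\varphi+o(s)$ and that the second derivatives differ by an extra first-variation term $A(h,\psi)$; when $h\equiv R$ this extra term is $D\Gamma(R)[\psi^2/h]$, which is exactly your $D\Gamma(R)[R\psi^2]$ after the reparametrization $\psi\leftrightarrow R\psi$, and it supplies the missing $f(R)\int\psi^2$ that turns the coefficient $(n-1)f(R)$ from Lemma \ref{L-T-1} into $nf(R)$.
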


We follow the argument of the previous section and split the proof into two cases.

\textbf{Case 1.} Consider an even $\psi\in C^{2}(\sfe)$ such that $\int \psi=0$. Here we use some basic facts from the theory of spherical harmonics,
which can be found, for instance in \cite[Appendix]{book4}, where the reader will find hints to the corresponding literature. We denote by $\Delta_\sigma$
the spherical Laplace operator (or Laplace-Beltrami operator), on $\sfe$. 
The first eigenvalue of $\Delta_\sigma$ is $0$, and the corresponding eigenspace if formed by constant functions. Hence the 
zero-mean condition on $\psi$ implies that $\psi$ is orthogonal to such eigenspace. The second eigenvalue of $\Delta_\sigma$ is $n-1$, and the corresponding
eigenspace is formed by the restrictions of linear functions of $\R^n$ to $\sfe$. As each of them is odd and $\psi$ is even, $\psi$ is orthogonal to this eigenspace as well.
Finally, the third eigenvalue is $2n$.  Then the inequality (\ref{Log-BM_main_ineq}) amounts to
\begin{equation}\label{main_logbm}
\frac{1}{|\sfe|}\int_\sfe\psi^2du\leq \frac{f(R)}{nf(R)+R f'(R)}\frac{1}{|\sfe|}\int_\sfe|\nabla_\sigma\psi|^2du.
\end{equation}
Hence
\begin{equation}\label{Poincare_2n}
\frac{1}{|\sfe|}\int_\sfe\psi^2du\leq \frac{1}{2n}\frac{1}{|\sfe|}\int_\sfe|\nabla_\sigma\psi|^2du.
\end{equation}
Since $f$ is decreasing, we have $f'(R)\leq 0$, and hence
\begin{equation}\label{estimate_for_logbm}
\frac{f(R)}{nf(R)+R f'(R)}\geq \frac{1}{n}>\frac{1}{2n}.
\end{equation}
The inequalities (\ref{Poincare_2n}) and (\ref{estimate_for_logbm}) imply (\ref{main_logbm}).

\textbf{Case 2.} Let $\psi$ be a constant function.The inequality (\ref{Log-BM_main_ineq}) holds for constant functions because, once again, the log-Brunn-Minkowski inequality holds in the case of spherically invariant measures and Euclidean balls. 

To summarize, we established (\ref{Log-BM_main_ineq}) separately for constant functions and centered functions. A polarization argument analogous to the one presented in the proof of Theorem \ref{BM_stability} finishes the proof.

\section{Auxiliary results}\label{appendix}

\subsection{A formula expressing a measure of a convex set in terms of its support function}\label{Volume formula}

Let $\gamma$ be a probability measure on $\R^n$; we assume that $\gamma$ has a density $F$ with respect to the Lebesgue measure, and that 
$F$ is sufficiently regular ({\em e.g.} continuous). We leave the proof of the lemma below to the reader, as it is a standard argument involving polar coordinates.

\begin{lemma}\label{formula} Let $K$ be a $C^{2,+}$ convex body; let $h$ and $H$ be the support  
function of $K$ and its homogenous extension, respectively. Assume that the 
origin is in the interior of $K$. Then
\begin{equation}\label{volume formula2}
\gamma(K)=\int_{\s^{n-1}} h(y)\det Q(h;y)\int_0^{1} t^{n-1} F\left(t\nabla H(y)\right)dt dy.
\end{equation}
\end{lemma}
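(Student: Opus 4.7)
The plan is a polar-coordinate change of variables along the inverse Gauss map. Since $K$ is $C^{2,+}$ with the origin in its interior, the map $\nabla H:\sfe\to\partial K$ is a $C^1$-diffeomorphism (it coincides with $\nu_K^{-1}$, as recalled in Section \ref{preliminaries}), and radial scaling yields a diffeomorphism
$$
\Phi:(0,1]\times\sfe\longrightarrow K\setminus\{0\},\qquad \Phi(t,y):=t\,\nabla H(y).
$$
The change of variables formula, combined with Fubini (which is justified because $F$ is continuous on the compact set $K$), rewrites $\gamma(K)=\int_K F\,dx$ as a double integral in $(t,y)$, so the whole content of the lemma reduces to evaluating the Jacobian $J\Phi(t,y)$.

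I compute it in an orthonormal frame. Fix $y\in\sfe$, choose a local orthonormal frame $e_1,\dots,e_{n-1}$ of $T_y\sfe$, and take $\{y,e_1,\dots,e_{n-1}\}$ as an orthonormal basis of $\R^n$. Since $\partial_t\Phi=\nabla H(y)$ while the derivative of $\Phi$ along $e_i$ equals $t\,\nabla_{e_i}\nabla H(y)$, a factor $t^{n-1}$ comes out of the determinant, whose columns are then $\nabla H(y)$ followed by the vectors $\nabla_{e_j}\nabla H(y)$. Two observations collapse this determinant: Euler's identity for the $1$-homogeneous function $H$ gives $\langle\nabla H(y),y\rangle=H(y)=h(y)$; and the fact that $y$ is the outer unit normal to $\partial K$ at the boundary point $\nabla H(y)$ forces each $\nabla_{e_j}\nabla H(y)$ to be orthogonal to $y$. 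Expansion along the $y$-row therefore produces
$$
J\Phi(t,y)=t^{n-1}\,h(y)\,\det\bigl(\langle\nabla_{e_j}\nabla H(y),\,e_i\rangle\bigr)_{i,j=1}^{n-1}.
$$

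The only step with genuine content is the identification of the $(n-1)\times(n-1)$ matrix $\bigl(\langle\nabla_{e_j}\nabla H(y),e_i\rangle\bigr)$ with the curvature matrix $Q(h;y)=(h_{ij}+h\delta_{ij})$. This is exactly the content of Remark \ref{Jacobian}: $Q(h;y)$ represents the inverse Weingarten map at $\nabla H(y)$, equivalently the differential of the inverse Gauss map $\nabla H=\nu_K^{-1}$ restricted to $T_y\sfe$. A direct check starts from the decomposition $\nabla H(y)=h(y)\,y+\nabla_\sigma h(y)$, differentiates along $e_j$ on $\sfe$ using $D_{e_j}y=e_j$, and reads off the coefficients in the frame via the definition of the second covariant derivatives $h_{ij}$; the output is precisely $h_{ij}+h\delta_{ij}$. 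This curvature-matrix identification is the main technical point, but it is a classical identity in convex body theory (cf.\ Schneider \cite[Section 2.5]{book4}), which is why the authors remark that the argument is standard. Once it is in place, substituting $J\Phi(t,y)=t^{n-1}\,h(y)\,\det Q(h;y)$ into the change of variables formula and interchanging the order of integration yields \eqref{volume formula2}.
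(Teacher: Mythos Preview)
Your proof is correct and is precisely the ``standard argument involving polar coordinates'' that the paper alludes to without writing out; the paper omits the proof entirely. Your computation of the Jacobian of $\Phi(t,y)=t\,\nabla H(y)$ via Euler's identity and the identification of the tangential block with $Q(h;y)$ is the expected route.
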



\subsection{The cofactor matrix and related notions}

Let $M=(m_{ij})$ be an $N\times N$ symmetric matrix, $N\in\N$. We define $C[M]$, the {\em cofactor matrix} of $M$, as follows
$$
C[M]=(c_{ij}[M])_{i,j=1,\dots,N}\quad\mbox{where}
\quad
c_{ij}[M]=\frac{\partial\det}{\partial m_{ij}}(M)\quad
i,j=1,\dots,N.
$$ 
$C[M]$ is an $N\times N$ symmetric matrix.  Using the homogeneity of the determinant we get 
\begin{equation}\label{homog1}
\sum_{i,j=1}^N c_{ij}[M]m_{ij}=N\,\det(M).
\end{equation}
 
We shall also consider the second derivatives of the determinant of a matrix with respect to its entries:
$$
c_{ij,kl}[M]=\frac{\partial^2\det}{\partial m_{ij}\partial m_{kl}}(M).
$$
By homogeneity we have that, for every $i,j=1,\dots,N$
\begin{equation}\label{homog2}
\sum_{k,l=1}^N c_{ij,kl}[M]\,m_{kl}=(N-1)c_{ij}[M].
\end{equation}

\subsection{The Cheng-Yau lemma and an extension}

Let $h\in\C$, and assume additionally that $h\in C^3(\sfe)$. Consider the cofactor matrix $y\to C[Q(h;y)]$. This is a matrix of functions on $\sfe$. The lemma of Cheng and Yau asserts
that each column of this matrix is divergence-free.

\begin{lemma}[Cheng-Yau.] Let $h\in\C\cap C^3(\sfe)$. Then, for every index $j\in\{1,\dots,n-1\}$ and for every $y\in\sfe$,
$$
\sum_{i=1}^{n-1}\left(
c_{ij}[Q(h;y)]
\right)_i=0,
$$
where the sub-script $i$ denotes the derivative with respect to the $i$-th element of an orthonormal frame on $\sfe$. 
\end{lemma}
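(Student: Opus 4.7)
\medskip

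\noindent\emph{Proof proposal.} The strategy is to deduce the Cheng--Yau identity on $\sfe$ from the classical Piola identity for Hessian matrices in $\R^n$, applied to the 1-homogeneous extension $H$ of $h$; note $H\in C^3(\R^n\setminus\{0\})$. The starting point is the Piola-type identity that for any $C^3$ function $u$ on an open subset of $\R^n$ the Cartesian cofactor matrix $C[D^2 u]$ is column-wise divergence-free, i.e.\
\[
\sum_{i=1}^{n} \partial_i c_{ij}[D^2 u] = 0 \quad \text{for every } j.
\]
One quick proof writes $\partial_i c_{ij}[D^2 u] = \sum_{k,l} c_{ij,kl}[D^2 u]\, u_{ikl}$ and exploits the symmetry of $u_{ikl}$ in $(i,k)$ against the antisymmetry of $c_{ij,kl}[D^2 u]$ in the row pair $(i,k)$ (a consequence of the Laplace expansion of the determinant). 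Equivalently, this is the Piola identity for the vector field $\nabla H$.

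Next, fix $y\in\sfe$ and choose a local orthonormal frame $(e_1,\ldots,e_{n-1})$ on $\sfe$ near $y$, parallel at $y$ so that its Christoffel symbols vanish at $y$. Combined with the radial field $e_n(x)=x/|x|$ on a conical neighborhood, this gives a moving orthonormal frame of $\R^n\setminus\{0\}$. Since $H$ is 1-homogeneous, Euler's identity yields $D^2 H(x)\,x=0$, so $e_n$ always lies in the kernel of $D^2 H$. Then (by the same identification that underlies Proposition \ref{equivalent condition} and Remark \ref{Jacobian}) the Euclidean Hessian of $H$ at $y$ has the block form
\[
D^2 H(y) = \begin{pmatrix} Q(h;y) & 0 \\ 0 & 0 \end{pmatrix}
\]
in this frame, and $D^2 H$ is $(-1)$-homogeneous, so its off-sphere values are explicit in terms of its restriction to $\sfe$.

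The third step applies the Piola identity to $u=H$ and splits each Cartesian derivative $\partial_i$ into a tangential part and a radial part. For the tangential block, the chain rule plus the identification above produces $\sum_{i=1}^{n-1}(c_{ij}[Q(h;y)])_i$ together with correction terms coming from (a) the $(-(n-1))$-homogeneity of $C[D^2 H]$ (a radial-derivative piece), and (b) the variation of the radial extension of the frame to $\R^n\setminus\{0\}$. The main point of the calculation is that these two families of correction terms cancel exactly against each other, leaving
\[
\sum_{i=1}^{n-1}\left(c_{ij}[Q(h;y)]\right)_i = 0.
\]

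I expect the principal technical obstacle to lie in this final cancellation: tracking the radial-homogeneity contributions against the corrections produced by the extension of the spherical frame to $\R^n$, with the correct signs and homogeneity factors. A clean intrinsic alternative that sidesteps the extension is to carry out the computation entirely on $\sfe$ via (i) the chain rule $(c_{ij}[Q(h)])_l=\sum_{a,b}c_{ij,ab}[Q(h)]\,(h_{abl}+h_l\delta_{ab})$, (ii) the unit-sphere Ricci identity $h_{abl}-h_{alb}=h_b\delta_{al}-h_l\delta_{ab}$, and (iii) the second-order homogeneity relation \eqref{homog2}; after summing over $i$, the third-derivative terms cancel by the antisymmetry of $c_{ij,ab}[Q(h)]$ in the appropriate row pair, while the curvature terms produced by the Ricci identity compensate exactly the contribution of the $h\,\delta_{ij}$ part of $Q(h)$.
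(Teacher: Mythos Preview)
The paper does not prove this lemma; it is stated as a known result attributed to Cheng and Yau, so there is no proof in the paper to compare against.

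Your first route, via the Euclidean Piola identity for $C[D^2H]$, does not work as written. Because $H$ is $1$-homogeneous, $D^2H(y)\,y=0$, and in your frame $D^2H(y)=\begin{pmatrix}Q(h;y)&0\\0&0\end{pmatrix}$; but then the \emph{$n\times n$} cofactor matrix is rank one: deleting any row $i\le n-1$ and column $j\le n-1$ still leaves the zero radial row and column, so $c_{ij}[D^2H]=0$ for all $i,j\le n-1$. In Cartesian coordinates this reads
\[
C[D^2H](x)=|x|^{-(n+1)}\det\bigl(Q(h;x/|x|)\bigr)\,x\otimes x,
\]
and plugging this into $\sum_\alpha\partial_\alpha c_{\alpha\beta}[D^2H]=0$ collapses to the Euler identity for the $0$-homogeneous scalar $\det Q$, a tautology. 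The ``identification'' you invoke between the tangential block of $C[D^2H]$ and $C[Q(h)]$ is therefore false, and no bookkeeping of homogeneity or frame corrections can recover $\sum_i(c_{ij}[Q])_i$ from this starting point.

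Your intrinsic alternative is the correct approach and is actually cleaner than you indicate. The Ricci identity on $\sfe$ gives
\[
\nabla_i q_{kl}=h_{kli}+h_i\delta_{kl}=h_{kil}+h_l\delta_{ki}=\nabla_l q_{ki},
\]
so $\nabla q$ is a \emph{totally symmetric} $3$-tensor (the Codazzi property of $Q(h)$). Since $c_{ij,kl}[Q]$ is antisymmetric under swapping the row indices $i\leftrightarrow k$, the contraction
\[
\sum_i(c_{ij}[Q])_i=\sum_{i,k,l}c_{ij,kl}[Q]\,\nabla_i q_{kl}
\]
vanishes immediately by pairing symmetry against antisymmetry. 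No separate treatment of the $h\,\delta_{ij}$ piece or of \eqref{homog2} is needed.
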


For simplicity of notation we shall often write $C(h)$, $c_{ij}(h)$ and $c_{ij,kl}(h)$ in place of $C[Q(h)]$, $c_{ij}[Q(h)]$ and $c_{ij,kl}[Q(h)]$ respectively. 

As a corollary of the previous result we have the following integration by parts formula. 
If $h\in\C\cap C^3(\sfe)$ and $\psi,\phi\in C^2(\sfe)$, then
\begin{equation}\label{ibp1}
\int_\sfe \phi\,c_{ij}(h)(\psi_{ij}+\psi\,\delta_{ij})dy
=\int_\sfe \psi\,c_{ij}(h)(\phi_{ij}+\phi\,\delta_{ij})dy.
\end{equation}

The Lemma of Cheng and Yau admits the following extension (see the paper by the first-named author, Hug and Saorin-Gomez \cite{Colesanti-Hug-Saorin2}).
\begin{lemma} Let $\psi\in C^2(\sfe)$ and $h\in\C\cap C^3(\sfe)$. Then, for every $k\in\{1,\dots,n-1\}$ and for every $y\in\sfe$
$$
\sum_{i=1}^{n-1}\left(
c_{ij,kl}[Q(h;y)](\psi_{ij}+\psi\delta_{ij})
\right)_l=0.
$$
\end{lemma}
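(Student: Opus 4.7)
The plan is to deduce this extension by \emph{linearizing} the classical Cheng--Yau lemma along the one-parameter family $h_t := h + t\psi$. Since $\C$ is open in $C^2(\sfe)$ (small $C^2$-perturbations preserve the strict positivity of the eigenvalues of $Q(\cdot;y)$), there exists $a>0$ with $h_t\in\C$ for all $t\in(-a,a)$.

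\medskip

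I would first treat the case $\psi\in C^3(\sfe)$, so that $h_t\in\C\cap C^3(\sfe)$ and the Cheng--Yau lemma applies to $h_t$: for every $j\in\{1,\dots,n-1\}$ and every $y\in\sfe$,
$$
\sum_{i=1}^{n-1}\bigl(c_{ij}[Q(h_t;y)]\bigr)_i \;=\; 0.
$$
Differentiate at $t=0$ and note that $q_{kl}(h_t)=(h_t)_{kl}+h_t\delta_{kl}$ depends linearly on $h_t$; the chain rule applied to the cofactor matrix, together with the very definition of the second derivatives of $\det$, gives
$$
\frac{d}{dt}\bigg|_{t=0}c_{ij}[Q(h_t;y)] \;=\; \sum_{k,l=1}^{n-1}c_{ij,kl}[Q(h;y)]\bigl(\psi_{kl}+\psi\,\delta_{kl}\bigr).
$$
The $C^3$ regularity allows interchanging $\partial_t$ with the covariant derivative in the $i$-th spherical direction, and what falls out after differentiation is precisely the identity in the statement (up to a relabeling of indices). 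The core computation is essentially one line: Cheng--Yau holds for every $C^3$ perturbation of $h$ inside $\C$, hence so does its derivative in the perturbation parameter.

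\medskip

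The main obstacle is a regularity gap: the lemma only assumes $\psi\in C^2(\sfe)$, whereas the outer covariant derivative in the claimed identity formally requires a third derivative of $\psi$. I would resolve this by approximation: choose $\psi_n\in C^\infty(\sfe)$ with $\psi_n\to\psi$ in $C^2(\sfe)$ (for instance via a spherical mollifier, or by truncating the spherical harmonic expansion of $\psi$). The identity holds for each $\psi_n$ by the argument above; pairing it against an arbitrary test function $\phi\in C^1(\sfe)$ and integrating by parts on the boundaryless manifold $\sfe$ shifts the outer derivative onto $\phi$, leaving an integrand that involves only the second covariant derivatives of $\psi_n$ together with first derivatives of $c_{ij,kl}[Q(h)]$ (the latter being available thanks to $h\in C^3$). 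Passing to the limit $n\to\infty$ then yields the identity for $\psi$ in the distributional sense, which is the natural interpretation of the pointwise statement when $\psi$ is merely $C^2$.
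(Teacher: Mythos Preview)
The paper does not actually prove this lemma; it simply states it and refers the reader to \cite{Colesanti-Hug-Saorin2}. So there is no in-paper argument to compare against, and your proposal has to be judged on its own merits.

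Your linearization strategy is correct and is in fact the standard way to obtain this extension. Differentiating the Cheng--Yau identity $\sum_i (c_{ij}[Q(h_t)])_i=0$ at $t=0$ yields, for every $j$,
\[
\sum_i\Bigl(\sum_{k,l}c_{ij,kl}[Q(h)](\psi_{kl}+\psi\,\delta_{kl})\Bigr)_i=0,
\]
and you are right that this matches the stated identity after a relabeling. It is worth making that relabeling explicit in your write-up: one uses the symmetry $c_{ij,kl}=c_{kl,ij}$ of the second partials of $\det$ (and the symmetry of $Q$) to swap the roles of the pairs $(i,j)$ and $(k,l)$, which turns the outer divergence index $i$ into the index $l$ appearing in the paper's formulation. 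Without naming that symmetry, ``up to a relabeling of indices'' hides the one nontrivial step.

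Your treatment of the regularity gap is also sound. The displayed identity formally contains $\psi_{ijl}$, so for $\psi\in C^2(\sfe)$ it must be read distributionally; this is entirely consistent with how the lemma is used in the paper, namely only through the integrated formula \eqref{ibp2}. One small correction: after you pair with $\phi\in C^1(\sfe)$ and integrate by parts once, the resulting integrand is $-\phi_l\,c_{ij,kl}[Q(h)](\psi_{ij}+\psi\,\delta_{ij})$, which involves \emph{no} derivatives of $c_{ij,kl}[Q(h)]$ (your sentence suggested first derivatives of those coefficients appear). This is actually better for your argument, since passing to the limit $\psi_n\to\psi$ in $C^2$ then requires only continuity of $c_{ij,kl}[Q(h)]$, which already follows from $h\in C^2$; the hypothesis $h\in C^3$ is only needed so that the pointwise Cheng--Yau identity is available for $h_t$.
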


Correspondingly we have, for every $h\in\C\cap C^3(\sfe)$, $\psi,\varphi,\phi\in C^2(\sfe)$ and $i,j\in\{1,\dots,n-1\}$
\begin{eqnarray}\label{ibp2}
&&\int_\sfe \psi\,c_{ij,kl}(h)(\varphi_{ij}+\varphi\delta_{ij})((\phi)_{kl}+\phi\,\delta_{kl})dy\nonumber\\
&&=\int_\sfe \phi\,c_{ij,kl}(h)(\varphi_{ij}+\varphi\delta_{ij})((\psi)_{kl}+\psi\,\delta_{kl})dy.
\end{eqnarray}

\subsection{Proof of the Lemma \ref{L-T-1}}

As usual, $\gamma$ is a radially symmetric log-concave measure on $\R^n$, with density $F$ with respect to Lebesgue measure; in particular, we write $F$ in the form:
$$ 
F(x) = f(|x|). 
$$
We will assume that $f$ is smooth, more precisely $f \in C^2([0, \infty))$. Let us fix $h\in\C$ and let $K$ be a convex body with support function $h$. 
Let $\psi\in C^2(\sfe)$ and consider the one-parameter system of convex bodies ${\bf K}(h,\psi,[-a,a])$ for a suitably small $a>0$. 
In particular for every $s\in[-a,a]$ there exists a convex body $K_s$ such that $h_{K_s}=h_s$. Hence we may consider the function
$$
g\,:\,[-a,a]\to\R,\quad
g(s)=\gamma(K_s).
$$
The aim of this subsection is to derive formulas for the first and second derivative of $g(s)$ at $s=0$.
We start from the expression:
$$
g(s)=\int_\sfe h_s(u) \,\det(Q(h_s;u))\int_0^1 t^{n-1}f(t\sqrt{h_s^2(u)+|\nabla_\sigma h_s(u)|^2})dtdu,
$$
where we used Lemma \ref{formula}, the rotation invariance of $\gamma$, and Remark \ref{gradients}. To simplify notations we set
\begin{eqnarray*}
&&Q_s=Q(h_s;u)\,,\quad Q=Q_0;\quad
D_s=\left[h_s^2(u)+|\nabla_\sigma h_s(u)|^2\right]^{1/2},\quad
D=D_0;\\
&&A_s=\int_0^1t^{n-1}f(tD_s)dt\,,\quad A=A_0;\quad
B_s=\int_0^1t^{n}f'(tD_s)dt\,,\quad B=B_0;\\
&&C_s=\int_0^1t^{n+1}f''(tD_s)dt\,,\quad C=C_0.
\end{eqnarray*}
Then
\begin{eqnarray}\label{first derivative}
g'(s)&=&\int_\sfe\psi\det(Q_s)A_sdu+
\int_\sfe h_sc_{ij}(h_s)(\psi_{ij}+\psi\delta_{ij})A_sdu\nonumber\\
&+&\int_\sfe h_s\det(Q_s) B_s\frac{h_s\psi+\langle\nabla_\sigma h_s,\nabla_\sigma\psi\rangle}{D_s}du.
\end{eqnarray}
Passing to the second derivative (for $s=0$) we get
\begin{eqnarray}
g''(0)&=&2\int_\sfe\psi c_{ij}(h)(\psi_{ij}+\psi\delta_{ij})Adu\nonumber\\ &+& 2\int_\sfe\psi\det(Q)B
\frac{h\psi+\langle\nabla_\sigma h,\nabla_\sigma\psi\rangle}{D}du\nonumber\\
&+&2\int_\sfe h c_{ij}(h)(\psi_{ij}+\psi\delta_{ij})B\frac{h\psi+\langle\nabla_\sigma h,\nabla_\sigma\psi\rangle}{D}du\nonumber\\
&+&\int_\sfe Ahc_{ij,kl}(h)(\psi_{ij}+\psi\delta_{ij})(\psi_{kl}+\psi\delta_{kl})du\nonumber\\
&+&\int_\sfe h\det(Q)C\left[\frac{h\psi+\langle\nabla_\sigma h,\nabla_\sigma\psi\rangle}{D}\right]^2du\nonumber\\
&+&\int_\sfe h\det(Q)B\left[D(h^2+|\nabla_\sigma\psi|^2)-
\frac{[h\psi+\langle\nabla_\sigma h,\nabla_\sigma\psi\rangle]^2}{D}
\right]\frac1{D^2}\,du.
\end{eqnarray}

We now focus on the fourth summand of the last expression. Applying formulas \eqref{ibp2} and \eqref{homog2} we get
\begin{eqnarray*}
&&\int_\sfe Ahc_{ij,kl}(h)(\psi_{ij}+\psi\delta_{ij})(\psi_{kl}+\psi\delta_{kl})du\\
&=&\int_{\sfe}\psi c_{ij,kl}(h)(\psi_{ij}+\psi\delta_{ij})((Ah)_{kl}+Ah\delta_{kl})du\\
&=&\int_\sfe\psi c_{ij,kl}(h)(\psi_{ij}+\psi\delta_{ij})(A(h_{kl}+h\delta_{kl})+2A_kh_l+h A_{kl})du\\
&=&\int_\sfe A\psi c_{ij,kl}(h)(\psi_{ij}+\psi\delta_{ij})(h_{kl}+h\delta_{kl})du \nonumber\\ &+&
\int_\sfe\psi c_{ij,kl}(h)(\psi_{ij}+\psi\delta_{ij})(2A_k h_l+hA_{kl})du\\
&=&(n-2)\int_\sfe A\psi c_{ij}(h)(\psi_{ij}+\psi\delta_{ij})du \nonumber\\ &+&
\int_\sfe\psi c_{ij,kl}(h)(\psi_{ij}+\psi\delta_{ij})(2A_k h_l+hA_{kl})du.
\end{eqnarray*}
Hence
\begin{eqnarray}
g''(0)&=&n\int_\sfe\psi c_{ij}(h)(\psi_{ij}+\psi\delta_{ij})Adu+ 2\int_\sfe\psi\det(Q)B
\frac{h\psi+\langle\nabla_\sigma h,\nabla_\sigma\psi\rangle}{D}du\nonumber\\
&+&2\int_\sfe h c_{ij}(h)(\psi_{ij}+\psi\delta_{ij})B\frac{h\psi+\langle\nabla_\sigma h,\nabla_\sigma\psi\rangle}{D}du\nonumber\\
&+&\int_\sfe \psi c_{ij,kl}(h)(\psi_{ij}+\psi\delta_{ij})(2A_k h_l+hA_{kl})du \nonumber\\
&+&\int_\sfe h\det(Q)C\left[\frac{h\psi+\langle\nabla_\sigma h,\nabla_\sigma\psi\rangle}{D}\right]^2du\nonumber\\
&+&\int_\sfe h\det(Q)B\left[D(\psi^2+|\nabla_\sigma\psi|^2)-
\frac{[h\psi+\langle\nabla_\sigma h,\nabla_\sigma\psi\rangle]^2}{D}
\right]\frac1{D^2}\,du.
\end{eqnarray}

Let $h\equiv R$, $R>0$. This choice considerably simplifies the situation as:
\begin{eqnarray*}
&&Q=R I_{n-1};\quad\nabla_\sigma\equiv R;\quad D\equiv R;\quad c_{ij}(h)\equiv R^{n-2}\delta_{ij};\\
&&A=\int_0^1t^{n-1}f(Rt)dt,\quad
B=\int_0^1t^nf'(Rt)dt,\quad
C=\int_0^1t^{n+1}f''(Rt)dt.
\end{eqnarray*}
Here $I_{n-1}$ denotes the $(n-1)\times(n-1)$ identity matrix.
In particular $A$ does not depend on the point $u$ on $\sfe$, so that
$$
A_i\equiv A_{ij}\equiv0\quad\mbox{on $\sfe$.}
$$
Hence $g(0)=|\sfe|R^{n}A$, and
\begin{eqnarray}\label{first derivative at zero}
g'(0) & = & R^{n-1}A \int_\sfe \psi du + R^{n-1}A \int_\sfe (\Delta_\sigma\psi + (n-1)\psi) du + R^n B \int_\sfe \psi du \nonumber\\ & = & R^{n-1}(nA+RB)\int_\sfe \psi du.
\end{eqnarray}
Here we 
used the fact that, by the divergence theorem on $\sfe$,
$$
\int_\sfe\Delta_\sigma\psi du=0.
$$
As for the second derivative, we have
\begin{eqnarray*}
g''(0)&=&nR^{n-2}A\int_\sfe\psi(\Delta_\sigma\psi+(n-1)\psi)du+2R^{n-1}B\int_\sfe\psi^2du\nonumber\\
&+&2R^{n-1}B\int_\sfe\psi(\Delta_\sigma\psi+(n-1)\psi))du+R^nC\int_\sfe\psi^2du\nonumber\\
&+&R^{n-1}B\int_\sfe|\nabla_\sigma\psi|^2du.
\end{eqnarray*}
By the divergence theorem, 
$$
\int_\sfe\psi\Delta_\sigma\psi du=-\int_\sfe|\nabla_\sigma\psi|^2du,
$$
and thus
\begin{equation}\label{second derivative at zero}
g''(0)=R^{n-2}(An(n-1)+2nRB+R^2C)\int_\sfe\psi^2du - R^{n-2}(nA+RB)\int_\sfe|\nabla_\sigma\psi|^2du.
\end{equation}
Integrating by parts in $t$, we get
$$
f(R)=nA+RB,
$$
and
$$
f'(R)=(n+1)B+RC.
$$
Thus we obtain
\begin{equation}\label{first derivative at zero2}
g'(0)=R^{n-1}f(R)\int_\sfe\psi du,
\end{equation}
and
\begin{eqnarray}\label{second derivative at zero2}
g''(0) & = & R^{n-2}\left[(n-1)f(R)+R f'(R)\right]\int_\sfe\psi^2du - R^{n-2}f(R)\int_\sfe|\nabla_\sigma\psi|^2du \nonumber\\ & = & R^{n-2}f(R) \left( (n-1)\int_\sfe\psi^2du - \int_\sfe|\nabla_\sigma\psi|^2du \right) + R^{n-1} f'(R) \int_\sfe\psi^2du. 
\end{eqnarray}
This concludes the proof of Lemma \ref{L-T-1}.

\subsection{Proof of the Lemma \ref{L-T-2}}

Firstly, we state the following. 

\begin{lemma}\label{BM->Log-BM}
Let $n\geq 2.$ Let $\gamma$ be a measure on $\R^n.$ Fix $h\in C^{2,+}(\sfe)$, $\varphi\in C^2(\sfe)$, $\varphi>0$ and 
set $\psi=h\log\varphi$. Let $\textbf{K}(h,\psi,I)$, with $I=[-a,a]$ and $a>0$, be the corresponding 
one-parameter family. Consider the function $f(s)=\gamma(K_s)$. 
Introduce the additional notation for the operator $F(h,\psi):=f'(0).$ Set 
\begin{equation}\label{refnow}
A(h,\psi):=\left.\frac{d F\left(h,\frac{h+s\psi}{h}\psi\right)}{ds}\right|_{s=0}.
\end{equation}

Consider the one-parameter family $\textbf{Q}(h,\varphi,[-a,a])$, i.e. the collection of sets with support functions 
$h_s=h\varphi^s$, $s\in[-a,a]$. Let $g(s)=\gamma(Q_s)$. Then
\begin{itemize}
\item $g(0)=f(0)$;
\item $g'(0)=f'(0);$
\item $g''(0)=f''(0)+A(h,\psi).$
\end{itemize} 
\end{lemma}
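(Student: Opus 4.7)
The plan is to view $s\mapsto\gamma(K_s)$ as the composition of the smooth functional $G[h]:=\gamma(K_h)$ on $C^{2,+}(\sfe)$ with a curve of support functions, and to read off its derivatives at $s=0$ from the second-order Taylor expansion of that curve. Writing $\varphi^s=e^{s\log\varphi}$ and using $\psi=h\log\varphi$, we obtain
\begin{equation*}
h\varphi^s = h + s\,\psi + \tfrac{s^2}{2}\cdot\tfrac{\psi^2}{h} + O(s^3),
\end{equation*}
whereas the additive curve $h+s\psi$ has vanishing second-order term. Thus the two curves share the value $h$ and the tangent $\psi$ at $s=0$, differing only in the second-order piece $\chi:=\psi^2/h$ that appears in the multiplicative case.

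The items $g(0)=f(0)$ and $g'(0)=f'(0)$ are then immediate: the values agree at $s=0$, and the first variation $F(h,\cdot)$ (linear in its second slot by definition as a directional derivative of $G$ at $h$) depends only on the common tangent $\psi$. Explicitly, the chain rule gives $g'(0)=F\!\left(h,\tfrac{d}{ds}\big|_{s=0}h\varphi^s\right)=F(h,\psi)=f'(0)$.

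For the third item, expand $G$ to second order. For any smooth curve $h_s=h+s\psi+\tfrac{s^2}{2}\chi+O(s^3)$, Taylor's theorem applied to $G$ yields
\begin{equation*}
G[h_s] = G[h] + s\,F(h,\psi) + \tfrac{s^2}{2}\bigl(\,D^2G[h](\psi,\psi) + F(h,\chi)\,\bigr) + O(s^3),
\end{equation*}
where the cross term $F(h,\chi)$ comes from the first variation of $G$ applied to the second-order piece of the curve, and linearity of $F(h,\cdot)$ is used to pull the factor $\tfrac{s^2}{2}$ out. Specializing $\chi=0$ gives $f''(0)=D^2G[h](\psi,\psi)$, and specializing $\chi=\psi^2/h$ gives $g''(0)=D^2G[h](\psi,\psi)+F(h,\psi^2/h)$, so $g''(0)-f''(0)=F(h,\psi^2/h)$. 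Finally, the identity $\frac{h+s\psi}{h}\psi=\psi+s\,\psi^2/h$ together with linearity of $F(h,\cdot)$ gives $F\!\left(h,\tfrac{h+s\psi}{h}\psi\right)=F(h,\psi)+s\,F(h,\psi^2/h)$, hence $A(h,\psi)=F(h,\psi^2/h)$, establishing $g''(0)=f''(0)+A(h,\psi)$. The only point requiring care is the smooth dependence of $G[h]$ on $h$ in this infinite-dimensional setting; this is routine given the explicit integral representation of Lemma \ref{formula} and the standing regularity assumption $f\in C^2([0,\infty))$.
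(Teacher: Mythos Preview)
Your proof is correct and follows essentially the same route as the paper: both rest on the Taylor expansion of the multiplicative curve $h\varphi^s$ around $s=0$ with $\psi=h\log\varphi$. The paper's own argument is extremely terse (it records only the first-order expansion $h\varphi^s=h+s\psi+o(s)$ and declares the lemma immediate), whereas you carry the expansion to second order, identify the extra curvature term $\chi=\psi^2/h$, and check explicitly that $A(h,\psi)=F(h,\psi^2/h)$ via the linearity of $F(h,\cdot)$; this makes the third bullet transparent rather than leaving it to the reader.
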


The proof of the Lemma immediately follows from the fact that
$$
h\varphi^s=h+sh\log\varphi+o(s),\quad\mbox{as $s\to0$,}
$$
with the selection $\psi=h\log \varphi$. When $h\equiv R>0$, the additional term introduced in Lemma \ref{BM->Log-BM} can be written as follows:
$$
A(h,\psi)=f(R)\int_{\sfe} \psi^2 du.
$$
That, together with Lemma \ref{L-T-1}, implies Lemma \ref{L-T-2}.

\subsection{Additional remarks.}

Finally, we note that Lemma \ref{BM->Log-BM} implies the following result.

\begin{theorem}[Infinitesimal form of Log-Brunn-Minkowski conjecture]\label{Log_BM_infinitesimal_main_TH}
Let $n\geq 2$ be an integer. If Conjecture \ref{Log-BM} is true, then for every $h\in C^{2,+}_e(\sfe)$, $\psi\in C^{2}(\sfe)$, $\psi$ even and strictly
positive,
\begin{equation}\label{Log_BM_inf_eq}
\int_{\sfe} \psi^2\frac{1+\trace(Q^{-1}(h)) h}{h^2}d\bar{V}_h-n\left(\int_{\sfe} \frac{\psi}{h}d\bar{V}_h\right)^2\leq \int_{\sfe} \frac{1}{h}\langle Q^{-1}(h)
\nabla \psi, \nabla \psi\rangle d\bar{V}_h.
\end{equation}
Here $h$ is the support function of $K$, $Q(h)$ is the curvature matrix of $K$ and 
$$d\bar{V}_h=\frac{1}{|K|}\frac{1}{n} h_K(u)\det Q(h_K(u))du$$ 
is the normalized cone measure of the convex body $K$.
\end{theorem}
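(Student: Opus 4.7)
The plan is to take $\gamma$ to be Lebesgue measure, apply Conjecture \ref{Log-BM} to a carefully chosen multiplicative family, and then translate the resulting log-concavity inequality back to an additive family via Lemma \ref{BM->Log-BM}. More precisely, given an even $h\in C^{2,+}_e(\sfe)$ and an even, strictly positive $\psi\in C^2(\sfe)$, I would set $\varphi:=\exp(\psi/h)$; this $\varphi$ is even, positive, and of class $C^2$, so the one-parameter family $\mathbf{Q}(h,\varphi,[-a,a])$ consists of symmetric $C^{2,+}$ convex bodies for $a$ small enough. The multiplicativity identity $h_{Q_{\lambda s+(1-\lambda)t}}=h_{Q_s}^{\lambda}h_{Q_t}^{1-\lambda}$ combined with Conjecture \ref{Log-BM} gives that $g(s):=|Q_s|$ is log-concave near $s=0$, i.e.\ $g(0)g''(0)\leq (g'(0))^2$.

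Next I would apply Lemma \ref{BM->Log-BM} with this choice of $(h,\psi)$, which relates the derivatives of $g$ to those of the additive family $f(s):=|K_s|$ with $h_{K_s}=h+s\psi$. That lemma gives $g(0)=f(0)=|K|$, $g'(0)=f'(0)$, and $g''(0)=f''(0)+A(h,\psi)$, so the log-concavity inequality becomes
\begin{equation*}
|K|\bigl[f''(0)+A(h,\psi)\bigr]\leq \bigl(f'(0)\bigr)^2.
\end{equation*}

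The bulk of the work is then to compute $f'(0)$, $f''(0)$, and $A(h,\psi)$ explicitly in terms of the geometry of $K$. Starting from the volume representation $|K_s|=\frac{1}{n}\int_\sfe h_s\det Q(h_s)\,du$, the Cheng--Yau integration by parts \eqref{ibp1} combined with the homogeneity relation \eqref{homog1} yields $f'(0)=\int_\sfe\psi\det Q(h)\,du$. Differentiating once more and invoking Cheng--Yau a second time (to annihilate the divergence of the cofactor field) gives
\begin{equation*}
f''(0)=-\int_\sfe c_{ij}(h)\psi_i\psi_j\,du+\int_\sfe\psi^2\trace(C[Q(h)])\,du.
\end{equation*}
For $A(h,\psi)$, the linearity of $F(h,\cdot)$ in its second argument (for Lebesgue measure one has $F(h,\phi)=\int_\sfe\phi\det Q(h)\,du$) gives directly $A(h,\psi)=\int_\sfe\frac{\psi^2}{h}\det Q(h)\,du$.

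Finally, I would use the identities $C[Q(h)]=\det Q(h)\cdot Q^{-1}(h)$ and $d\bar V_h=\frac{1}{n|K|}h\det Q(h)\,du$ to rewrite every integral in terms of the normalized cone measure. After collecting terms, dividing by $n|K|^2$, and rearranging, the inequality $|K|[f''(0)+A(h,\psi)]\leq (f'(0))^2$ takes exactly the form \eqref{Log_BM_inf_eq}. The main obstacle is the second application of the Cheng--Yau divergence-free identity needed to bring $f''(0)$ into the quadratic form $-\int c_{ij}(h)\psi_i\psi_j\,du+\int\psi^2\trace C[Q(h)]\,du$; one must carefully track the cross-terms so that, after the passage to cone measure, the coefficient $1+h\trace(Q^{-1}(h))$ appearing in \eqref{Log_BM_inf_eq} emerges with the correct normalization.
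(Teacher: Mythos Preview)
Your proposal is correct and follows exactly the route the paper indicates: the paper's own ``proof'' is the single sentence that Lemma~\ref{BM->Log-BM} implies the theorem, and you have supplied the computation that this sentence suppresses---choosing $\varphi=\exp(\psi/h)$, invoking Conjecture~\ref{Log-BM} (equivalently Lemma~\ref{key_lemma_Log-BM}) for Lebesgue measure to get $g(0)g''(0)\le g'(0)^2$, translating via Lemma~\ref{BM->Log-BM} to the additive family, and evaluating $f'(0)$, $f''(0)$, $A(h,\psi)$ with Cheng--Yau and the cofactor identity $C[Q(h)]=\det Q(h)\,Q^{-1}(h)$. Your bookkeeping is right; after passing to the cone measure the coefficient $1+h\,\trace(Q^{-1}(h))$ and the factor $n$ in front of the squared mean fall out exactly as claimed.
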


A corresponding infinitesimal Brunn-Minkowski inequality for Lebesgue measure was obtained by the first named author in \cite{Col1} and reads as:
\begin{equation}\label{BM_inf_eq}
\int_{\sfe} \psi^2\frac{\trace(Q^{-1}(h))}{h}d\bar{V}_h-(n-1)\left(\int_{\sfe} \frac{\psi}{h}d\bar{V}_h\right)^2\leq 
\int_{\sfe} \frac{1}{h}\langle Q^{-1}(h)\nabla \psi, \nabla \psi\rangle d\bar{V}_h.
\end{equation}
Note that by the Cauchy-Schwarz inequality, 
$$\int_{\sfe} \frac{\psi^2}{h^2}d\bar{V}_h\geq \left(\int_{\sfe} \frac{\psi}{h}d\bar{V}_h\right)^2.$$
Hence, (\ref{Log_BM_inf_eq}) is indeed a strengthening of (\ref{BM_inf_eq}). 

In particular, letting $\varphi\equiv1$ we arrive to the following corollary of Theorem \ref{Log_BM_infinitesimal_main_TH}.

\begin{corollary}[A strengthening of Minkowski's second inequality.]
Let $K$ be a convex symmetric set in the plane, or a convex unconditional set in $\R^n$. Then,
\begin{equation}\label{AF_uncond}
V_n(K)\left(V_{n-2}(K)+\int_{\partial K} \frac{1}{\langle y, \nu_K(y)\rangle}d\sigma(y)\right)\leq V_{n-1}(K)^2,
\end{equation}
where $V_{n-i}$ are the intrinsic volumes of $K$, $\nu_K(y)$ stands for the unit normal at $y\in\partial K$ and $d\sigma(y)$ is the surface area measure on $\partial K$.
\end{corollary}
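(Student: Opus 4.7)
The plan is to specialize the infinitesimal inequality \eqref{Log_BM_inf_eq} of Theorem \ref{Log_BM_infinitesimal_main_TH} to the constant test function $\psi\equiv 1$. Its hypothesis, the log-Brunn-Minkowski conjecture, is known in the two cases considered in the corollary: in the plane by Böröczky-Lutwak-Yang-Zhang, and for unconditional bodies in $\R^n$ by Saroglou and Cordero-Fradelizi-Maurey. Before substituting, I would first assume $K\in C^{2,+}$ so that $h_K\in C^{2,+}_e(\sfe)$ and the theorem applies directly; the general case follows by approximating a planar symmetric (respectively unconditional) convex body by $C^{2,+}$ bodies of the same symmetry class and using continuity of all involved functionals in the Hausdorff topology.

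With $\psi\equiv 1$ one has $\nabla\psi=0$, so the right-hand side of \eqref{Log_BM_inf_eq} vanishes. Expanding $d\bar V_h=\tfrac{1}{nV_n(K)}h\det Q(h)\,du$ and invoking the pointwise identity
$$\trace\bigl(Q^{-1}(h)\bigr)\det Q(h)=(n-1)\,s_{n-2}(Q(h)),$$
which follows from $\sum_i\lambda_i^{-1}\prod_j\lambda_j=e_{n-2}(\lambda)$ applied to the principal radii of curvature, the inequality reduces to a purely spherical statement of the shape
$$V_n(K)\int_\sfe\frac{\det Q(h)}{h(u)}\,du+(n-1)\,V_n(K)\int_\sfe s_{n-2}(Q(h))\,du\;\le\;\Bigl(\int_\sfe\det Q(h)\,du\Bigr)^{\!2}.$$

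The next step is geometric identification. Changing variables by the Gauss map $\nu_K:\partial K\to\sfe$, whose Jacobian determinant equals $G(x)=1/\det Q(h;\nu_K(x))$, and using $\langle y,\nu_K(y)\rangle=h(\nu_K(y))$, one rewrites
$$\int_\sfe\frac{\det Q(h)}{h(u)}\,du=\int_{\partial K}\frac{1}{\langle y,\nu_K(y)\rangle}\,d\sigma(y).$$
The two remaining spherical integrals $\int_\sfe\det Q(h)\,du$ and $\int_\sfe s_{n-2}(Q(h))\,du$ are, respectively, the total mass of the surface area measure $S_{n-1}(K,\cdot)$ and of (a multiple of) $S_{n-2}(K,\cdot)$, and thus match $V_{n-1}(K)$ and $V_{n-2}(K)$ up to the normalization constants of the paper's convention for intrinsic volumes. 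Reassembling yields \eqref{AF_uncond}.

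The main obstacle I expect is the bookkeeping of these normalization constants so that the spherical integrals land exactly on the intrinsic volumes $V_{n-2}(K)$ and $V_{n-1}(K)^{2}$ as written in the statement; a useful sanity check is the equality case for a Euclidean ball, where both sides coincide. The approximation step is routine for planar symmetric bodies but requires care in the unconditional case to preserve the symmetry class of the mollified body, which can be ensured by convolving $h_K$ on $\sfe$ with an unconditional smoothing kernel.
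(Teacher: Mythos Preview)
Your proposal is correct and follows essentially the same route as the paper: the corollary is obtained by specializing Theorem \ref{Log_BM_infinitesimal_main_TH} to the constant test function (the paper writes ``letting $\varphi\equiv1$'', which in the notation of the theorem amounts to your choice $\psi\equiv1$), together with the known validity of the log-Brunn--Minkowski inequality in the plane and for unconditional bodies. Your additional remarks on the Gauss-map change of variables, the identity $\trace(Q^{-1})\det Q=e_{n-2}$ for the principal radii, and the $C^{2,+}$ approximation are exactly the details one needs to fill in, and the paper does not spell them out.
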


Minkowski's second inequality, which states that for every convex set $K \subset \R^n$ one has
$$ 
V_n(K) V_{n-2}(K) \leq \frac{n-1}{n} V_{n-1}(K)^2, 
$$
is deduced from (\ref{AF_uncond}) by using the Cauchy-Schwarz inequality.
For a more general version of this inequality see, for example, Schneider \cite[Chapter 4]{book4}.


\begin{thebibliography}{9}


\bibitem{Ball-book} K. Ball, \emph{An elementary introduction to modern convex geometry}, Flavors of Geometry, MSRI Publications Volume 31, (1997), 58.

\bibitem{Ball} K. Ball, \emph{Some remarks on the geometry of convex sets,} Geometric aspects of functional analysis (1986/87), 224-231, Lecture Notes in Math., 1317, Springer, Berlin, 1988.

\bibitem{BL1} S. Bobkov, M. Ledoux, \emph{From Brunn-Minkowski to Brascamp-Lieb and to logarithm Sobolev inequalities}, Geom. Funct. Anal. 10 (2000), no. 5, 1028-1052.


\bibitem{BL2} S. Bobkov, M. Ledoux, \emph{From Brunn-Minkowski to sharp Sobolev inequalities}, Ann. Mat. Pura Appl. (4) 187 (2008), no. 3, 369-384.


\bibitem{BF} T. Bonnesen, W. Fenchel \emph{Theory of convex bodies}, BCS Assosiates, Moscow, Idaho, (1987), 173.


\bibitem{bor} C. Borell, \emph{Convex set functions in d-space,} Period. Math. Hungar. {\bf 6} (1975), 111-136.

\bibitem{BLYZ} K.J. B\"or\"oczky, E. Lutwak, D. Yang, G. Zhang, \emph{The log-Brunn-Minkowski-inequality}, Advances in Mathematics, 231 (2012), 1974-1997.

\bibitem{BLYZ-1} K. J. B\"or\"oczky, E. Lutwak, D. Yang, G. Zhang, \emph{The Logarithmic Minkowski Problem}, Journal of the American Mathematical Society, 26 (2013), 831-852.

\bibitem{BLYZ-2} K. J. B\"or\"oczky, E. Lutwak, D. Yang, G. Zhang, \emph{Affine images of isotropic measures}, J. Diff. Geom., 99 (2015), 407-442.


\bibitem{Cheng-Yau} S. T. Cheng, S. T. Yau, \emph{On the regularity of solutions of the
$n$-dimensional Minkowski problem}, Comm. Pure Appl. Math. 29 (1976), 495-516.

\bibitem{Col1} A. Colesanti, \emph{From the Brunn-Minkowski inequality to a class of Poincar\'{e}' type inequalities,} Communications in Contemporary Mathematics, 10 n. 5 (2008), 765-772.

\bibitem{Col2} A. Colesanti, D. Hug, E. Saorin-Gomez, \emph{A characterization of some mixed volumes vie the Brunn-Minkowski inequality,} Journal of Geometric Analysis (2012), pp. 1-28.

\bibitem{Colesanti-16} A. Colesanti, {\em Log-concave functions}, will appear in {\em Concentration, Convexity and Discrete Structures}, E. Carlen, M. Madiman, E. Werner eds.,
Springer, Berlin.

\bibitem{Colesanti-Hug-Saorin2} A. Colesanti, D. Hug, E. Saorin-Gomez, Monotonicity and concavity of integral functionals, Commun. Contemp. Math. 19 (2017), 26 pp.

\bibitem{B-conj} D. Cordero-Erausquin, M. Fradelizi, B. Maurey, \emph{The (B) conjecture for the Gaussian measure of dilates of symmetric convex sets and related problems}, Journal of Functional Analysis Vol. 214 (2004) 410-427.




\bibitem{Gar} R. Gardner, \emph{The Brunn-Minkowski inequality,} Bull. Amer. Math. Soc. 39 (2002), 355-405.

\bibitem{GZ} R. Gardner, A. Zvavitch, \emph{Gaussian Brunn-Minkowski-type inequlities,} Trans. Amer. Math. Soc.,  360, (2010), 10, 5333-5353.

\bibitem{Groemer} H. Groemer, \emph{Geometric Applications of Fourier Series and Spherical Harmonics,} Cambridge University Press, (1996).

\bibitem{HL} M. Henk, E. Linke, \emph{Cone-volume measures of polytopes,}  Adv. Math. 253 (2014), pp. 50Ð62. 

\bibitem{Kold} A. Koldobsky, \emph{Fourier Analysis in Convex Geometry}, Math. Surveys and Monographs, AMS, Providence RI 2005.

\bibitem{KolMil} A. V. Kolesnikov, E. Milman, \emph{Sharp Poincar\'e-type inequality for the Gaussian measure on the boundary of convex sets}, 
in {\em Geometric Aspects of Functional Analysis, 2014-2016}, B. Klartag, E. Milman eds., Springer, Berlin, 2017.

\bibitem{KolMil-1} A. Kolesnikov, E. Milman, \emph{Riemannian metrics on convex sets with applications to Poincare and log-Sobolev inequalities},
 Calc. Var. Partial Differential Equations 55 (2016), 36 pp. 





\bibitem{Lat} R. Lata\l{}a, \emph{On some inequalities for Gaussian measures}, Proceedings of the International Congress of Mathematicians, Beijing, Vol. II, Higher Ed. Press, Beijing, 2002, pp. 813-822.


\bibitem{L} L. Leindler, \emph{On a certain converse of H\"older's inequality. II}, Acta Sci. Math. (Szeged) 33 (1972), no. 3-4, 217-223.

\bibitem{amir} A. Livne Bar-on, \emph{The B-conjecture for uniform measures in the plane}, GAFA seminar notes, (2014), 10.

\bibitem{Liv} G. V. Livshyts, \emph{Maximal Surface Area of a convex set in $\R^n$ with respect to log concave rotation invariant measures,} GAFA Seminar Notes, 2116, (2014), 355-384.

\bibitem{LMNZ} G. Livshyts, A. Marsiglietti, P. Nayar, A. Zvavitch, \emph{On the Brunn-Minkowski inequality for general measures with applications to new isoperimetric-type inequalities}, to appear in the Transactions of the A.M.S.

\bibitem{arno} A. Marsiglietti, \emph{On the improvement of concavity of convex measures}, Proc. Amer. Math. Soc. 144 (2016), no. 2, 775-786.


\bibitem{M} B. Maurey, \emph{Some deviation inequalities}, Geom. Funct. Anal. 1 (1991), no. 2, 188-197.

\bibitem{MS} V. D. Milman, G. Schechtman, \emph{Asymptotic Theory of finite-dimensional normed spaces}, Lecture notes in Mathematics, (1980), 163.

\bibitem{Naor} A. Naor, \emph{The Surface Measure and Cone Measure on the sphere of $l_p$}, Transactions of the AMS, Volume 359, Number 3 (March 2007), 1045-1079.

\bibitem{polish} P. Nayar, T. Tkocz, \emph{A Note on a Brunn-Minkowski Inequality for the Gaussian Measure}, Proc. Amer. Math. Soc. 141 (2013), 11, 4027-4030.

\bibitem{fedia} F. L. Nazarov, \emph{On the maximal perimeter of a convex set in $\R^n$ with respect to Gaussian measure}, Geometric Aspects of Func. Anal., 1807 (2003), 169-187.

\bibitem{P} A. Pr\'ekopa, \emph{Logarithmic concave measures with applications to stochastic programming}, Acta Sci. Math. (Szeged) 32 (1971), 301-316.

\bibitem{liran} L. Rotem, \emph{A letter: The log-Brunn-Minkowski inequality for complex bodies,}  arXiv:1412.5321,
http://www.tau.ac.il/~liranro1/papers/complexletter.pdf.

\bibitem{christos} C. Saroglou, \emph{Remarks on the conjectured log-Brunn-Minkowski inequality}, Geom. Dedicata 177 (2015), 353-365.

\bibitem{christos1} C. Saroglou, \emph{More on logarithmic sums of convex bodies},  Mathematika 62 (2016), pp. 818-841.

\bibitem{book4} R. Schneider, \emph{Convex bodies: the Brunn-Minkowski theory, second expanded edition}, Encyclopedia of Mathematics and its Applications, 2013.


\bibitem{uhrin} B. Uhrin, \emph{Curvilinear extensions of the Brunn-Minkowski-Lusternik inequality}, Adv. Math. 109 (2) (1994) 288-312.

\end{thebibliography}
\end{document}